\DeclareMathOperator{\tr}{Tr}
\DeclareMathOperator{\dom}{dom}
\DeclareMathOperator{\inte}{int}
\DeclareMathOperator*{\argmin}{\arg \min}
\newcommand{\rhostar}{\rho^\star}
\newcommand{\norm}[1]{\Vert #1 \Vert}
\newcommand{\du}{\mathrm{d}}
\title{A General Convergence Result for the Exponentiated Gradient Method}
\author{Yen-Huan~Li and Volkan~Cevher \\[5pt] \small Laboratory for Information and Inference Systems \\[-3pt] \small \'{E}cole Polytechnique F\'{e}d\'{e}rale de Lausanne \\[-3pt] \small CH-1015 Lausanne, Switzerland }
\date{\empty}
\begin{document} 

%
%
%

\maketitle

\begin{abstract} 
The batch exponentiated gradient (EG) method provides a principled approach to convex smooth minimization on the probability simplex or the space of quantum density matrices.
However, it is not always guaranteed to converge.
Existing convergence analyses of the EG method require certain quantitative smoothness conditions on the loss function, e.g., Lipschitz continuity of the loss function or its gradient, but those conditions may not hold in important applications.
In this paper, we prove that the EG method with Armijo line search always converges for \emph{any} convex loss function with a locally Lipschitz continuous gradient. 
Because of our convergence guarantee, the EG method with Armijo line search becomes the fastest guaranteed-to-converge algorithm for maximum-likelihood quantum state estimation, on the real datasets we have.
\end{abstract} 

\section{Introduction} \label{sec_intro}

\subsection{Problem Formulation} \label{sec_problem}

Consider the convex minimization problem
\begin{equation}
\rhostar \in \argmin \Set{ f ( \rho ) | \rho \in \mathcal{D} } , \label{eq_prob}
\end{equation}
where $f$ is a continuously differentiable convex loss function, and $\mathcal{D}$ is the set of \emph{(quantum) density matrices}, i.e., for some $d \in \mathbb{N}$, 
\begin{equation}
\mathcal{D} := \Set{ \rho \in \mathcal{C}^{d \times d} | \rho = \rho^H, \rho \geq 0, \tr ( \rho ) = 1 } . \notag
\end{equation}
A density matrix is a non-commutative analog of a probability distribution---if $\rho$ is diagonal, its diagonal elements define a probability distribution on $\set{ 1, \ldots, d }$.

The (batch) exponentiated gradient (EG) method \cite{Arora2007,Kivinen1997,Tsuda2005} provides a principled approach to solving such a convex program.
Starting with some non-singular density matrix $\rho_0$, the EG method iterates as
\begin{equation}
\rho_{k + 1} = c_k^{-1} \exp \left[ \log ( \rho_k ) - \alpha_k f' ( \rho_k ) \right] , \quad k \in \mathbb{Z}_+ , \label{eq_EG}
\end{equation}
for some given step size $\alpha_k$, where $c_k$ is a positive number normalizing the trace of $\rho_{k + 1}$.
The EG method, in its formulation, is also a special case of mirror descent \cite{Beck2003,Nemirovsky1983} and the interior gradient method \cite{Auslender2006}. 
We choose to call \eqref{eq_EG} the EG method, as this name refers exactly to the expression we consider.

Our goal is to show that if the step sizes are computed by Armijo line search, the EG method converges for \emph{almost all} continuously differentiable convex loss functions.
We will define precisely the class of loss functions we consider in Section \ref{sec_contribution}.  

By considering only diagonal matrices, the convex program \eqref{eq_prob} and the EG method \eqref{eq_EG} are equivalent to their vector counterparts, respectively (see, e.g., Section 4.3 in \cite{Bubeck2015a} for the vector formulation).
The theory in this paper hence automatically specializes to the vector case. 

\subsection{Motivation} \label{sec_motivation}

To derive a step size $\alpha_k$ that guarantees the convergence rate of the EG method, one needs to impose some quantitative smoothness condition on the loss function.
The standard condition is $L$-Lipschitz continuity of the loss function or its gradient on $\mathcal{D}$ \cite{Auslender2006,Beck2003,Nemirovsky1983}. 
$L$-Lipschitz continuity with respect to the relative entropy, instead of a norm, was considered in \cite{Bartlett2007,Collins2008}.
An $L$-Lipschitz-like condition was proposed in \cite{Bauschke2016}, requiring $L h - f$ to be convex for some $L > 0$, where $h$ denotes the negative entropy function.
The Lipschitz-like condition was later shown to be equivalent to $L$-Lipschitz continuity of the gradient with respect to the relative entropy in \cite{Lu2016}. 
Once a condition is verified and the corresponding parameter $L$ is explicitly computed, the step size $\alpha_k$ is then set as a function of $L$ and the iteration counter $k$.

However, the conditions may not hold, and verifying the conditions is usually non-trivial.
For instance, consider minimizing the loss function
\begin{equation}
f_1 ( x, y ) := - \log ( x ) - \log ( y ) , \notag
\end{equation}
on the probability simplex 
\begin{equation}
\mathcal{P} := \Set{ ( x, y ) \in \mathbb{R}^2 | x \geq 0, y \geq 0, x + y = 1 } . \notag 
\end{equation}
Neither $f_1$ nor its gradient $f_1'$ is Lipschitz continuous, due to the presence of the logarithmic function.
The Lipschitz-like condition \cite{Bartlett2007,Bauschke2016,Collins2008,Lu2016} requires the convexity of $L h ( x, y ) - f_1 ( x, y )$ for some $L > 0$ on $\mathcal{D}$, where $h ( x, y )$ is the negative entropy function: 
\begin{equation}
h ( x, y ) := x \log ( x ) + y \log ( y ) . \notag
\end{equation}
A necessary condition is
\begin{equation}
L \frac{\partial^2 h ( x, y )}{\partial x^2} - \frac{\partial^2 f_1 ( x, y )}{ \partial x ^ 2 } = \frac{L}{x} - \frac{1}{x^2} \geq 0 , \quad \text{for all } ( x, y ) \in \mathcal{P}, \notag
\end{equation}
which cannot hold for any fixed $L$, because $x$ can be arbitrarily close to zero.

The loss function $f_1$ is not simply an artificial example.
Consider a generalization of minimizing 
\begin{equation}
f_2 ( x ) := - \frac{1}{n} \sum_{i = 1}^n \log \braket{ b_i, x } \notag
\end{equation}
on the probability simplex for some $n \in \mathbb{N}$, where $b_1, \ldots, b_n$ are vectors in the non-negative orthant, for which $f_1$ is a special case with $b_1 = ( 1, 0 )$ and $b_2 = ( 0, 1 )$.
A minimizer corresponds to the best constant rebalanced strategy for log-optimal portfolio selection \cite{Cover1991}.
Consider a further generalization under the non-commutative setting: 
\begin{equation}
f_3 ( \rho ) := - \frac{1}{n} \sum_{i = 1}^n \log \tr ( M_i \rho ) , \quad \rho \in \mathcal{D} , \label{eq_f2}
\end{equation}
where $M_1, \ldots, M_n$ are given positive semi-definite matrices in $\mathbb{C}^{d \times d}$.
A minimizer of $f_3$ on $\mathcal{D}$ is a maximum-likelihood (ML) estimate for quantum state estimation \cite{Hradil1997}, and also an ML estimate of the PhaseLifted signal for phase retrieval with Poisson noise \cite{Odor2016}. 

As log-optimal portfolio selection by the EG method had been studied under the on-line setting (see, e.g., \cite{Cesa-Bianchi2006,Helmbold1998}), it is possible to extend existing results to the batch non-commutative formulation (i.e., minimizing $f_2$ on $\mathcal{D}$).
Such an extension, however, might not be able to address all other cases.
For example, the hedged approach to ML quantum state estimation considers minimizing $f_2 - \lambda_1 \log \det ( \rho )$ for some $\lambda_1 > 0$ \cite{Blume-Kohout2010}; the max-entropy approach considers minimizing $f_2 + \lambda_2 \tr ( \rho \log \rho )$ for some $\lambda_2 > 0$ \cite{Teo2011}; the approach to low-rank matrix estimation proposed in \cite{Koltchinskii2011b} considers minimizing $\sum_i \left[ y_i - \tr ( M_i \rho ) \right]^2 + \lambda_3 \tr ( \rho \log \rho )$ for some real numbers $y_i$, Hermitian matrices $M_i$, and $\lambda_3 > 0$; and a similar vector formulation of empirical risk minimization with Shannon entropy penalization was studied in \cite{Koltchinskii2009a}.
In all examples, the loss functions are not Lipschitz continuous in function values nor their gradients. 

Why do we not use the projected gradient method? 
Indeed, it was shown in \cite{Gafni1982} that the projected gradient method with Armijo line search converges for minimizing any continuously differentiable loss function. 
We notice that, however, the projected gradient method may be not well-defined. 
Consider minimizing $f_1$ on the probability simplex as an example.
As projection onto the probability simplex often results in a sparse output, it can happen that some iterate $( x_k, y_k )$ is exactly sparse; then $f_1 ( x_k, y_k )$ and $f_1' ( x_k, y_k )$ are not defined, and the algorithm is forced to terminate.
An explicit example is given by setting $( x_{k - 1}, y_{k - 1} ) = ( 0.99999, 0.00001 )$ and the step size (or the upper bound of it for Armijo line search) to be $1$, for which $( x_k, y_k ) = ( 0, 1 )$.



\subsection{Our Contribution} \label{sec_contribution}

Unlike existing results, we are interested in seeking for an \emph{universal} approach to convex smooth minimization on $\mathcal{D}$, which converges for minimizing almost all continuously differentiable convex functions.

We consider finding the step sizes by the Armijo line search rule.
The pseudo code is shown in Algorithm \ref{alg}, in which we define
\begin{equation}
\rho ( \alpha ) := c^{-1} \exp \left[ \log ( \rho ) - \alpha f' ( \rho ) \right], \label{eq_rho_alpha}
\end{equation}
for any non-singular density matrix $\rho$ and $\alpha > 0$, where the positive number $c$ normalizes the trace of $\rho ( \alpha )$.
The outer for-loop in Algorithm \ref{alg} implements the EG method; the inner while-loop applies the Armijo rule to find a proper step size.

\begin{algorithm}

\caption{Exponentiated Gradient Method with Armijo Line Search}

\label{alg}

\begin{algorithmic}[1]
\Require 
$\alpha > 0$, $r \in ( 0, 1 )$, $\tau \in ( 0, 1 )$, $\rho_0 \in \inte ( \mathcal{D} )$
\For{$k = 0, 1, 2, \ldots$}
\State $\alpha_k \leftarrow \alpha$
\While{$f( \rho_{k} ( \alpha_k ) ) > f ( \rho_{k} ) + \tau \Braket{ f' ( \rho_k ), \rho_k ( \alpha_k ) - \rho_k } $}
\State $\alpha_k \leftarrow r \alpha_k$
\EndWhile
\State $\rho_{k + 1} \leftarrow \rho_k ( \alpha_k )$
\EndFor
\end{algorithmic}

\end{algorithm}

The EG method with Armijo line search had been studied in \cite{Auslender2004,Auslender2006}, but the analyses therein assume Lipschitz continuity of $f'$.

Our contribution lies in deriving a convergence guarantee under a very weak smoothness condition on the loss function.

\begin{definition}
We say that $f$ has a locally Lipschitz continuous gradient, if for every $x \in \dom ( f )$, there exists a neighborhood in $\dom ( f )$ on which $f'$ is Lipschitz continuous.
\end{definition}

\begin{remark}
It is easily checked that if $f$ is twice continuously differentiable on $\dom ( f )$, then $f$ has a locally Lipschitz continuous gradient.
Therefore, for instance, the functions $f_1$,  $f_2$, and $f_3$ all have locally Lipschitz continuous gradients.
\end{remark}

The main result of this paper is Theorem \ref{thm_main}, which is proved in Section \ref{sec_proof}.

\begin{theorem} \label{thm_main}
Consider solving the convex program \eqref{eq_prob} by Algorithm \ref{alg}.
Assume that $f$ has a locally Lipschitz continuous gradient, and $\dom ( f )$ contains all non-singular density matrices.
The following statements hold.
\begin{enumerate}
\item The Armijo line search (Line 3--5) terminates in finite steps.
\item $\rho_k \in \mathcal{D}$ for all $k$.
\item $f ( \rho_{k + 1} ) \leq f ( \rho_k )$ for all $k$.
\item The sequence $( \rho_k )_{k \in \mathbb{N}}$ has at least one limit point.
\item Every limit point of $( \rho_k )_{k \in \mathbb{N}}$ minimizes $f$ on $\mathcal{D}$.
\end{enumerate}
\end{theorem}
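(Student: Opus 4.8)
The plan is to read the five claims through two structural facts about the map $\rho(\alpha)$ in \eqref{eq_rho_alpha}. First, with $h(\rho) := \tr(\rho\log\rho)$ and its Bregman divergence (the quantum relative entropy) $D(\sigma,\rho) := \tr(\sigma\log\sigma) - \tr(\sigma\log\rho)$, the update is a constrained mirror step, $\rho(\alpha) = \argmin_{\sigma\in\mathcal{D}}\{\alpha\Braket{f'(\rho),\sigma} + D(\sigma,\rho)\}$; its optimality condition together with the three-point identity for $D$ gives the descent-direction inequality $\Braket{f'(\rho),\rho(\alpha)-\rho}\leq -\frac1\alpha D(\rho(\alpha),\rho)\leq 0$, and with the quantum Pinsker inequality $D(\sigma,\rho)\geq\frac12\norm{\sigma-\rho}_1^2$ this upgrades to the quadratic bound $-\Braket{f'(\rho),\rho(\alpha)-\rho}\geq\frac1{2\alpha}\norm{\rho(\alpha)-\rho}_1^2$. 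Second, $\rho(\alpha)\to\rho$ as $\alpha\to0^+$. Claim 2 is then an induction, since $\rho(\alpha)$ is the exponential of a Hermitian matrix rescaled to unit trace, hence in $\inte(\mathcal{D})$, so $f(\rho_k)$ and $f'(\rho_k)$ are always defined. For Claim 1, fix $k$ and let $L$ be a Lipschitz constant of $f'$ on a convex neighbourhood of $\rho_k$ inside $\inte(\mathcal{D})$; once $\alpha$ is small enough that $\rho_k(\alpha)$ lies in that neighbourhood (which happens after finitely many backtracking steps, by the second fact), the descent lemma $f(\rho_k(\alpha))\leq f(\rho_k)+\Braket{f'(\rho_k),\rho_k(\alpha)-\rho_k}+\frac{L}{2}\norm{\rho_k(\alpha)-\rho_k}^2$ together with the quadratic bound makes the Armijo test succeed as soon as $\alpha\leq(1-\tau)/L$. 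Claim 3 follows from the Armijo test and the descent-direction inequality, and Claim 4 because $\mathcal{D}$ is compact.

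For Claim 5 I would reduce to proving $f(\rho_k)\to f^\star := \min_{\mathcal{D}} f$, attained at some $\rhostar\in\mathcal{D}\cap\dom(f)$ (the infimum is finite because $f$ dominates its supporting affine function at $\rho_0$, and attained because $f$ is closed and $\mathcal{D}$ is compact); this reduction suffices, since $f(\rho_k)$ decreases to some $f_\infty\geq f^\star$, and for any limit point $\bar\rho$ one has $f(\bar\rho)=f_\infty$ by continuity if $\bar\rho\in\inte(\mathcal{D})$ and $f(\bar\rho)\leq f_\infty$ by lower semicontinuity if $\bar\rho\in\partial\mathcal{D}$, so $f_\infty=f^\star$ makes every limit point a minimiser. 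Assume $f_\infty>f^\star$ for contradiction, and write $\Delta(\rho):=\max_{\sigma\in\mathcal{D}}\Braket{f'(\rho),\rho-\sigma}$, so that $\Delta(\rho)\geq f(\rho)-f^\star$ by convexity. A sufficient-decrease bound comes from testing minimality of $\rho_{k+1}$ against $(1-t)\rho_k+t\sigma_k^\star$, where $\sigma_k^\star$ attains $\Delta(\rho_k)$, and using $D((1-t)\rho_k+t\sigma,\rho_k)\leq\frac12 M t^2$ for $t\in[0,\frac12]$ with $M$ uniform over $\rho_k$ in a compact $K\subset\inte(\mathcal{D})$ (valid because $D(\cdot,\rho_k)$ has zero gradient at $\rho_k$ and Hessian controlled by $1/\lambda_{\min}(\rho_k)$): this yields $\Braket{f'(\rho_k),\rho_{k+1}-\rho_k}\leq-\min\{\alpha_k\Delta(\rho_k)^2/(2M),\Delta(\rho_k)/4\}$ and hence, via Armijo, $f(\rho_k)-f(\rho_{k+1})\geq\tau\min\{\alpha_k\Delta(\rho_k)^2/(2M),\Delta(\rho_k)/4\}$. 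A backtracking argument (either $\alpha_k=\alpha$, or the Armijo test failed at $\alpha_k/r$, forcing $\alpha_k/r>(1-\tau)/L_K$ via the descent lemma and the quadratic bound) bounds $\alpha_k$ below by a positive constant while $\rho_k$ stays in $K$; combined with $f(\rho_k)-f(\rho_{k+1})\to0$ this already excludes interior limit points, since along $\rho_{k_j}\to\bar\rho\in\inte(\mathcal{D})$ we would get $\Delta(\rho_{k_j})\to0$, hence $\Delta(\bar\rho)=0$ by continuity, hence $f(\bar\rho)=f^\star$, contradicting $f(\bar\rho)=f_\infty$.

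So under $f_\infty>f^\star$ every limit point lies on $\partial\mathcal{D}$, and this boundary case is where I expect the real work. From the three-point inequality with reference point $\rhostar$ (here $D(\rhostar,\rho_k)<\infty$ because each $\rho_k$ is full rank) plus Armijo one obtains $f(\rho_{k+1})-f^\star\leq(1-\tau)(f(\rho_k)-f^\star)+\frac\tau{\alpha_k}\bigl(D(\rhostar,\rho_k)-D(\rhostar,\rho_{k+1})\bigr)$; since $f(\rho_k)\downarrow f_\infty>f^\star$ this gives $c_0\,\alpha_k\leq D(\rhostar,\rho_k)-D(\rhostar,\rho_{k+1})$ for large $k$ and some $c_0>0$, so $\sum_k\alpha_k<\infty$. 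Feeding this into $f(\rho_k)-f(\rho_{k+1})\geq\frac\tau{2\alpha_k}\norm{\rho_{k+1}-\rho_k}_1^2$ and applying Cauchy--Schwarz gives $\sum_k\norm{\rho_{k+1}-\rho_k}_1<\infty$, so $(\rho_k)$ converges, $\rho_k\to\bar\rho\in\partial\mathcal{D}$. Finally I would telescope the update itself: $\log\rho_N=\log\rho_0-\sum_{j<N}\alpha_j f'(\rho_j)+s_N I$ with $s_N=\sum_{j<N}\log c_j$; pairing with the minimal eigenvector of $\rho_N$, and using the spectral bounds $e^{-\alpha_j\norm{f'(\rho_j)}}\leq c_j\leq e^{\alpha_j\norm{f'(\rho_j)}}$ (Weyl's inequality and $\tr\rho_j=1$), gives $-\log\lambda_{\min}(\rho_N)\leq\norm{\log\rho_0}+2\sum_{j<N}\alpha_j\norm{f'(\rho_j)}$. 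Since $\lambda_{\min}(\rho_N)\to0$ while $\sum_j\alpha_j<\infty$, we must have $\norm{f'(\rho_{j_m})}\to\infty$ along some $\rho_{j_m}\to\bar\rho$; this forces $\bar\rho\notin\dom(f)$ (otherwise $f'$ would be continuous there), whereupon lower semicontinuity of the closed function $f$ forces $f(\rho_j)\to+\infty$, contradicting $f(\rho_j)\to f_\infty<\infty$. The delicate point throughout this last paragraph is controlling the iterates as they approach $\partial\mathcal{D}$, where $f'$ is unbounded; the argument rests on the telescoped logarithm, the spectral bounds on the normalisers $c_j$, the standing hypothesis that $\dom(f)$ contains every non-singular density matrix (so the iterates stay well defined and full rank), and the mild assumption that $f$ is closed (satisfied by $f_1$, $f_2$, $f_3$ and all examples in Section~\ref{sec_motivation}). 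Everything else is the standard mirror-descent-with-Armijo toolkit specialised to the relative entropy.
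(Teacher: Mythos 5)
Your treatment of Statements 1--4 follows essentially the same path as the paper (the mirror-step characterization of $\rho(\alpha)$, the inequality $\Braket{f'(\rho),\rho(\alpha)-\rho}\le-\alpha^{-1}D(\rho(\alpha),\rho)$, Pinsker, the descent lemma giving success of the Armijo test for $\alpha\le(1-\tau)/L$, and compactness of $\mathcal{D}$), so the real comparison is Statement 5, where you take a genuinely different route. The paper never argues by contradiction on function values: it introduces $\psi(\rho)=\inf_{\sigma\in\mathcal{D}}\Braket{f'(\rho),\sigma-\rho}$ (your $-\Delta$), proves the local Peierls--Bogoliubov inequality (Proposition \ref{prop_local_PB}) that $\alpha\mapsto D(\rho(\alpha),\rho)/\alpha^{\gamma}$ is non-increasing with $\gamma$ depending continuously on $\rho$, uses it to convert the vanishing per-step decrease at the adaptive steps $\alpha_k$ into $D(\rho_k(\beta),\rho_k)\to0$ at a \emph{fixed} $\beta$ (Proposition \ref{prop_almost_convergence}), bounds $-\beta^{-1}D(\rho_k(\beta),\rho_k)\le\psi(\rho_k)\le0$ (Lemma \ref{lem_psi}), and concludes $\psi(\bar\rho)=0$ by continuity of $\psi$ on $\dom(f)$ --- this is exactly how it certifies optimality of possibly singular limit points, at which $\bar\rho(\beta)$ is undefined. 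You instead suppose $f(\rho_k)\downarrow f_\infty>f^\star$, combine the three-point (regret) inequality against $\rhostar$ with the Armijo condition to get $\sum_k\alpha_k<\infty$, deduce summable movement and convergence of the whole sequence, and then use the telescoped logarithm of the update together with the spectral bounds on the normalizers $c_j$ to force $\norm{f'(\rho_j)}$ to blow up, hence $\bar\rho\notin\dom(f)$ and a contradiction with lower semicontinuity; interior limit points are excluded separately via $\Delta$ and a step-size lower bound. Your route buys freedom from the Peierls--Bogoliubov machinery (including the paper's delicate claim that $\gamma$ varies continuously with $\rho$); the paper's route certifies each limit point directly without ever invoking $\rhostar$ or a global contradiction, and in particular handles singular limit points inside $\dom(f)$ head-on rather than showing they cannot occur.

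Two caveats you should address. First, you add the hypothesis that $f$ is closed; the theorem does not state it. To be fair, the paper's own Lemma \ref{lem_domain} (``otherwise Statement 3 cannot hold by the continuity of $f$'') tacitly leans on the same kind of lower-semicontinuity at points outside $\dom(f)$, so this is a comparable but explicit extra assumption --- flag it rather than call it mild and move on. Second, your step-size lower bound over a compact $K\subset\inte(\mathcal{D})$ (``the Armijo test failed at $\alpha_k/r$, forcing $\alpha_k/r>(1-\tau)/L_K$'') applies the descent lemma between $\rho_k$ and the rejected trial point $\rho_k(\alpha_k/r)$, which is legitimate only if that trial point already lies in the region where $L_K$ is a valid Lipschitz constant; with merely locally Lipschitz $f'$ this needs a uniform ``small trial step $\Rightarrow$ small move'' estimate, available by compactness of $K$ and continuity of $(\rho,\alpha)\mapsto\rho(\alpha)$ (or bypassed by noting that in your regime the rejected trial steps tend to $0$, so the trial points converge to $\bar\rho$ --- which is exactly how the paper argues in its $\liminf\alpha_k=0$ case). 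With those two points patched, your argument goes through.
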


Notice that both Algorithm \ref{alg} and Theorem \ref{thm_main} do not assume the local Lipschitz constants of $f'$ to be known nor uniformly bounded.

Our problem formulation does not impose any quantitative smoothness condition on the loss function, so we do not have a guarantee on the convergence rate. 
Numerical experiments on ML quantum state estimation (Section \ref{sec_exp}), nevertheless, show that the empirical convergence rate of the EG method with Armijo line search can be competitive.
In fact, the EG method with Armijo line search is \emph{the fastest} among all existing guaranteed-to-converge algorithms for ML quantum state estimation, on the real experimental data we have.
Recall that existing analyses for the EG method, with and without line search, do not directly apply to ML quantum state estimation, and the projected gradient method is, rigorously speaking, not applicable.

\subsection{Notations}

Let $g$ be a convex function taking values in $\mathbb{R} \cup \Set{ \pm \infty }$. 
The (effective) domain of $g$, denoted by $\dom ( g )$, is given by $\dom ( g ) = \Set{ x | g ( x ) < + \infty }$.
We denote the gradient of $g$ by $g'$, and the Hessian by $g''$. 

We will focus on the non-commutative formulation \eqref{eq_prob} in the rest of this paper.
To define the gradient of $f$ properly is tricky, as a non-constant real-valued function of complex variables cannot be analytic.
We define $f' ( x )$ at $x \in \dom ( f )$ as the unique matrix such that 
\begin{equation}
f ( y ) \geq f ( x ) + \Braket{ f' ( x ), y - x } , \notag
\end{equation}
for all $y \in \dom ( f )$, where the inner product is the Hilbert-Schmidt inner product, i.e., for any matrices $X, Y \in \mathbb{C}^{d \times d}$, 
\begin{equation}
\Braket{ X, Y } := \tr ( X^H Y ). \notag
\end{equation}
The definition of the EG method (cf. \eqref{eq_EG}) presumes that $f'$ is Hermitian.

The inner products in the rest of this paper will be all Hilbert-Schmidt, unless otherwise specified.
We denote by $\norm{ \cdot }_F$ the Frobenius norm, and $\norm{ \cdot }_{\tr}$ the trace norm.

The functions $\exp ( \cdot )$ and $\log ( \cdot )$ in \eqref{eq_EG} are matrix exponential and matrix logarithmic functions.
Generally speaking, let $X = \sum_{j \in \mathcal{J}} \lambda_j P_j$ be the spectral decomposition of a Hermitian matrix $X$, where $P_j$ is the projection onto the eigenspace corresponding to $\lambda_j$ for all $j \in \mathcal{J}$.
Let $g$ be a real-valued function whose domain contains $\set{ \lambda_j : j \in \mathcal{J} }$.
Then $g ( X )$ is defined as $\sum_{j \in \mathcal{J}} g ( \lambda_j ) P_j$.

The von Neumann entropy of a density matrix $\rho$ is given by
\begin{equation}
h ( \rho ) := - \tr ( \rho \log \rho ) , \notag
\end{equation}
where we adopt the convention that $0 \log 0 = 0$.
The quantum relative entropy between two density matrices $\rho$ and $\sigma$, denoted by $D ( \rho, \sigma )$, is given by
\begin{equation}
D ( \rho, \sigma ) := \left\{ \begin{array}{ll} 
\tr ( \rho \log \rho ) - \tr ( \rho \log \sigma ) & \text{if } \ker ( \rho ) \supseteq \ker ( \sigma ) , \\
+ \infty & \text{otherwise} .
\end{array} \right. \notag
\end{equation}
The relative entropy is always non-negative.
Two non-singular density matrices $\rho$ and $\sigma$ are the same, if and only if $D ( \rho, \sigma ) = 0$.

%

\section{Proof of Theorem \ref{thm_main}} \label{sec_proof}

Section \ref{sec_prelim} provides some necessary background knowledge. 
Section \ref{sec_PB} presents a local Peierls-Bogoliubov inequality, which is key in establishing the convergence statement in Theorem \ref{thm_main}.
Section \ref{sec_main_proof} shows the complete proof of Theorem \ref{thm_main}.

\subsection{Preliminaries} \label{sec_prelim}

We defined $\rho ( \alpha )$ explicitly in \eqref{eq_rho_alpha}. 
The following lemma shows that $\rho ( \alpha )$ admits an equivalent definition. 

\begin{lemma} \label{lem_equiv}
For any non-singular density matrix $\rho$ and $\alpha > 0$, one has
\begin{equation}
\rho ( \alpha ) = \argmin \Set{ \Braket{ f' ( \rho ), \sigma - \rho } + \frac{1}{\alpha} D ( \sigma, \rho ) | \sigma \in \mathcal{D} } . \label{eq_equiv}
\end{equation}
\end{lemma}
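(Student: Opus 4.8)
The plan is to verify that the explicitly-defined $\rho(\alpha)$ satisfies the first-order optimality condition for the strictly convex problem on the right-hand side of \eqref{eq_equiv}, which, by strict convexity, pins down the unique minimizer. First I would note that the objective $\Phi(\sigma) := \Braket{ f'(\rho), \sigma - \rho } + \tfrac{1}{\alpha} D(\sigma,\rho)$ is strictly convex on $\mathcal{D}$: the linear term contributes nothing to curvature, and $\sigma \mapsto D(\sigma,\rho) = \tr(\sigma\log\sigma) - \tr(\sigma\log\rho)$ is strictly convex because $\sigma\mapsto\tr(\sigma\log\sigma)$ is (negative von Neumann entropy) while $\sigma\mapsto-\tr(\sigma\log\rho)$ is linear. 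Hence the right-hand minimizer exists and is unique, and it suffices to exhibit one feasible point satisfying the stationarity condition. I would also record that $\rho(\alpha)$ as defined in \eqref{eq_rho_alpha} is non-singular (a normalized matrix exponential is positive definite) and has unit trace, so it lies in $\inte(\mathcal{D})$ and is a legitimate candidate; in particular $D(\cdot,\rho)$ is finite and differentiable in a neighborhood of it since $\rho$ is non-singular.

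Next I would compute the gradient of $\Phi$. Using the Hilbert-Schmidt inner product, $\nabla_\sigma \Braket{f'(\rho),\sigma-\rho} = f'(\rho)$, and the gradient of $\sigma\mapsto\tr(\sigma\log\sigma)$ at a positive definite $\sigma$ is $\log\sigma + I$, while the gradient of the linear term $-\tr(\sigma\log\rho)$ is $-\log\rho$. Therefore $\Phi'(\sigma) = f'(\rho) + \tfrac{1}{\alpha}\bigl(\log\sigma - \log\rho + I\bigr)$. The constraint set $\mathcal{D}$ is the intersection of the PSD cone with the affine hyperplane $\tr(\sigma) = 1$; since the candidate $\rho(\alpha)$ is in the interior of the PSD cone, the PSD constraint is inactive there, and the KKT condition reduces to requiring $\Phi'(\sigma)$ to be orthogonal to the hyperplane's tangent space $\{X = X^H : \tr(X) = 0\}$, i.e. $\Phi'(\sigma) = \mu I$ for some real Lagrange multiplier $\mu$. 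Plugging in $\sigma = \rho(\alpha)$: by \eqref{eq_rho_alpha}, $\log\rho(\alpha) = \log\rho - \alpha f'(\rho) - (\log c)I$, so $\log\rho(\alpha) - \log\rho + I = -\alpha f'(\rho) + (1-\log c)I$, whence $\Phi'(\rho(\alpha)) = f'(\rho) + \tfrac{1}{\alpha}\bigl(-\alpha f'(\rho) + (1-\log c)I\bigr) = \tfrac{1-\log c}{\alpha} I$, which is indeed a scalar multiple of the identity. Thus $\rho(\alpha)$ satisfies the stationarity condition, and by strict convexity it is the unique minimizer, proving \eqref{eq_equiv}.

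The main point requiring care is justifying that stationarity restricted to the affine hull of $\mathcal{D}$ is sufficient for global optimality even though the feasible set has a boundary (the rank-deficient matrices): this is exactly where non-singularity of the candidate matters. Since $D(\sigma,\rho) \to +\infty$ as $\sigma$ approaches a matrix not dominated by $\rho$ is automatic here because $\rho$ is non-singular so every $\sigma\in\mathcal{D}$ has $\ker(\rho)=\{0\}\subseteq\ker(\sigma)$ and the relative entropy is finite on all of $\mathcal{D}$; what I actually need is that the directional derivative of the convex function $\Phi$ at $\rho(\alpha)$ along any feasible direction $\sigma - \rho(\alpha)$, $\sigma\in\mathcal{D}$, is nonnegative. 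Writing $\sigma - \rho(\alpha) =: X$ with $\tr(X)=0$ and $X=X^H$, we get $\Braket{\Phi'(\rho(\alpha)), X} = \tfrac{1-\log c}{\alpha}\tr(X) = 0 \ge 0$, and convexity of $\Phi$ then gives $\Phi(\sigma) \ge \Phi(\rho(\alpha)) + \Braket{\Phi'(\rho(\alpha)),\sigma-\rho(\alpha)} = \Phi(\rho(\alpha))$ for all $\sigma\in\mathcal{D}$. (One subtlety: $\Phi'$ as written involves $\log\sigma$, so it is only defined when $\sigma$ is non-singular; but the convexity inequality $\Phi(\sigma)\ge\Phi(\rho(\alpha)) + \Braket{\Phi'(\rho(\alpha)),\sigma-\rho(\alpha)}$ needs differentiability only at the base point $\rho(\alpha)$, which is non-singular, so this causes no difficulty — it holds for all $\sigma\in\dom(\Phi)=\mathcal{D}$.) This completes the argument; the rest is the routine matrix-calculus identity $\tfrac{\du}{\du\sigma}\tr(\sigma\log\sigma) = \log\sigma + I$, which I would either cite or verify via the integral representation of the derivative of the matrix logarithm.
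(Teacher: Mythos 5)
Your proof is correct and takes essentially the same route the paper intends: the paper's proof of Lemma \ref{lem_equiv} is a one-line pointer to its references (in particular, ``directly solve the convex program as in \cite{Tsuda2005}''), and your computation --- strict convexity for uniqueness, the Lagrange-multiplier/stationarity condition for the trace constraint, non-singularity of $\rho(\alpha)$ making the positive semi-definiteness constraint inactive, and the gradient inequality at the differentiable base point to cover singular $\sigma \in \mathcal{D}$ --- is exactly that direct solution carried out in detail. No gaps.
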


\begin{proof}
Combine the arguments in \cite{Beck2003} and Section 4.3 of \cite{Bubeck2015a}, or directly solve the convex program as in \cite{Tsuda2005}.
\end{proof}

Notice that $\rho$ itself is a feasible point of the convex program \eqref{eq_equiv}. 
One then has
\begin{equation}
\Braket{ f' ( \rho ), \rho ( \alpha ) - \rho } + \frac{1}{\alpha} D ( \rho ( \alpha ), \rho ) \leq 0 . \notag
\end{equation}
This proves the following corollary.

\begin{corollary} \label{cor_equiv}
For any non-singular density matrix $\rho$ and $\alpha > 0$, one has
\begin{equation}
\Braket{ f' ( \rho ), \rho ( \alpha ) - \rho } \leq - \frac{D ( \rho ( \alpha ), \rho )}{ \alpha } . \notag
\end{equation}
\end{corollary}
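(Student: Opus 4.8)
The plan is to read the inequality off directly from the variational characterization of $\rho ( \alpha )$ supplied by Lemma \ref{lem_equiv}, without any further analysis. That lemma identifies $\rho ( \alpha )$ as the minimizer over $\sigma \in \mathcal{D}$ of the objective
\[
\phi ( \sigma ) := \Braket{ f' ( \rho ), \sigma - \rho } + \frac{1}{\alpha} D ( \sigma, \rho ) .
\]
The whole strategy is therefore to exhibit one convenient feasible competitor whose objective value we can evaluate in closed form, and then invoke optimality. The natural choice is $\sigma = \rho$ itself.

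First I would check feasibility: since $\rho$ is a non-singular density matrix, we have $\rho \in \inte ( \mathcal{D} ) \subseteq \mathcal{D}$, so $\rho$ is admissible in the program \eqref{eq_equiv}. Next I would compute $\phi ( \rho )$. The linear term $\Braket{ f' ( \rho ), \rho - \rho }$ vanishes trivially, and the divergence term vanishes as well, because $D ( \rho, \rho ) = 0$; this is finite and zero precisely because $\ker ( \rho ) \supseteq \ker ( \rho )$ holds (so the first branch of the definition of $D$ applies) and then $\tr ( \rho \log \rho ) - \tr ( \rho \log \rho ) = 0$, consistent with the fact recorded in the Notations that the relative entropy of a density matrix with itself is zero. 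Hence $\phi ( \rho ) = 0$.

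Finally, since $\rho ( \alpha )$ minimizes $\phi$ over $\mathcal{D}$ and $\rho$ is feasible, optimality gives $\phi ( \rho ( \alpha ) ) \leq \phi ( \rho ) = 0$, that is,
\[
\Braket{ f' ( \rho ), \rho ( \alpha ) - \rho } + \frac{1}{\alpha} D ( \rho ( \alpha ), \rho ) \leq 0 ,
\]
which is exactly the displayed inequality preceding the corollary in the text. Moving the divergence term to the right-hand side and using $\alpha > 0$ then yields the stated bound $\Braket{ f' ( \rho ), \rho ( \alpha ) - \rho } \leq - D ( \rho ( \alpha ), \rho ) / \alpha$. There is essentially no obstacle here, as the argument is a one-line consequence of Lemma \ref{lem_equiv}; the only points worth double-checking are the two cancellations in the second step, namely that $\rho$ genuinely lies in the domain of the objective so that $D ( \rho, \rho )$ is finite and equal to $0$, and that the feasible set $\mathcal{D}$ is identical in Lemma \ref{lem_equiv} and in the corollary.
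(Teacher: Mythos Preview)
Your proof is correct and follows exactly the same approach as the paper: plug the feasible point $\sigma = \rho$ into the variational characterization of Lemma~\ref{lem_equiv}, observe that the objective vanishes there, and conclude by optimality of $\rho(\alpha)$.
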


Lemma \ref{lem_equiv} implies a fixed-point characterization of a minimizer.

\begin{lemma} \label{lem_fp}
A non-singular density matrix $\rho$ minimizes $f$ on $\mathcal{D}$, if $\rho = \rho ( \alpha )$ for some $\alpha > 0$. 
On the other hand, if a non-singular density matrix $\rho$ minimizes $f$ on $\mathcal{D}$, then $\rho = \rho ( \alpha )$ for all $\alpha \geq 0$.
\end{lemma}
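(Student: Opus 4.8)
The plan is to extract both implications directly from the variational characterization in Lemma \ref{lem_equiv}, using the fact that the convex program in \eqref{eq_equiv} has $f'(\rho)$ as the only data depending on $\rho$, while the regularizer $D(\sigma,\rho)$ is minimized (and vanishes) at $\sigma = \rho$. For the first implication, suppose $\rho = \rho(\alpha)$ for some fixed $\alpha > 0$. Then $\rho$ is the minimizer of $\sigma \mapsto \Braket{f'(\rho), \sigma - \rho} + \frac{1}{\alpha} D(\sigma,\rho)$ over $\mathcal{D}$. I would write the first-order optimality condition for this convex program at the interior point $\rho$: for every $\sigma \in \mathcal{D}$,
\begin{equation}
\Braket{ f'(\rho) + \tfrac{1}{\alpha} \nabla_\sigma D(\sigma,\rho)\big|_{\sigma=\rho}, \sigma - \rho } \geq 0. \notag
\end{equation}
Since $\nabla_\sigma D(\sigma,\rho) = \log\sigma - \log\rho$ (plus the trace-constraint multiplier, which is harmless because $\tr(\sigma-\rho)=0$), this gradient vanishes at $\sigma = \rho$, leaving $\Braket{f'(\rho), \sigma - \rho} \geq 0$ for all $\sigma \in \mathcal{D}$. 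By convexity of $f$, this is exactly the statement that $\rho$ minimizes $f$ on $\mathcal{D}$.

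For the converse, suppose the non-singular $\rho$ minimizes $f$ on $\mathcal{D}$, so $\Braket{f'(\rho), \sigma - \rho} \geq 0$ for all $\sigma \in \mathcal{D}$. Fix any $\alpha > 0$ and consider the objective $\Phi(\sigma) := \Braket{f'(\rho), \sigma - \rho} + \frac{1}{\alpha} D(\sigma,\rho)$. Both terms are non-negative on $\mathcal{D}$ (the first by optimality of $\rho$, the second because relative entropy is non-negative), and both vanish at $\sigma = \rho$; hence $\rho$ attains the minimum value $0$ of $\Phi$ over $\mathcal{D}$. Since $D(\cdot,\rho)$ is strictly convex, $\Phi$ has a unique minimizer, which must be $\rho$; by Lemma \ref{lem_equiv} that minimizer is $\rho(\alpha)$, so $\rho = \rho(\alpha)$. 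The case $\alpha = 0$ is trivial or interpreted by continuity as $\rho(0) = \rho$, matching the definition \eqref{eq_rho_alpha} in the limit.

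The main obstacle is handling the trace constraint and the domain restriction in the optimality condition cleanly: $D(\sigma,\rho)$ is only finite when $\ker(\sigma) \supseteq \ker(\rho)$, but since $\rho$ is non-singular this is automatic and $\rho$ lies in the relative interior of $\mathcal{D}$, so the subgradient calculus for $D(\cdot,\rho)$ applies without boundary complications; one just needs to note that the Lagrange multiplier for $\tr(\sigma) = 1$ contributes a term proportional to the identity, which inner-products to zero against $\sigma - \rho$. The strict convexity argument in the converse is the slickest route and sidesteps re-deriving optimality conditions, so I would lean on that rather than matching first-order conditions in both directions.
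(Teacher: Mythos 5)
Your proof is correct and follows essentially the same route as the paper: both rest on the variational characterization of $\rho(\alpha)$ in Lemma \ref{lem_equiv} together with the observation that the relative-entropy term contributes nothing to the first-order optimality condition at $\sigma = \rho$ (its gradient there is a multiple of the identity, which pairs to zero with $\sigma - \rho$ since traces are equal), so minimizing $f$ on $\mathcal{D}$ and solving the proximal subproblem share the same optimality condition. The only cosmetic difference is that for the converse you use nonnegativity of both terms plus strict convexity of $D(\cdot,\rho)$, whereas the paper notes directly that the two first-order conditions coincide after adding and subtracting $\alpha^{-1}\tilde{h}'(\rho)$; both are valid.
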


\begin{proof}
The first-order optimality condition (see, e.g., \cite{Nesterov2004}) says that $\rho$ is a minimizer, if and only if
\begin{equation}
\Braket{ f' ( \rho ), \sigma - \rho } \geq 0 , \notag
\end{equation}
for all $\sigma \in \mathcal{D}$.
Equivalently, we write
\begin{equation}
\Braket{ f' ( \rho ) + \alpha^{-1} \tilde{h}' ( \rho ) - \alpha^{-1} \tilde{h}' ( \rho ), \sigma - \rho } \geq 0 , \label{eq_redundant_optimality}
\end{equation}
where $\tilde{h} ( \rho ) := \tr ( \rho \log \rho ) - \tr ( \rho )$. 
It is easily checked that \eqref{eq_redundant_optimality} is the optimality condition of
\begin{equation}
\rho = \argmin \Set{ \Braket{ f' ( \rho ), \sigma - \rho } + \alpha^{-1} D ( \sigma, \rho ) | \sigma \in \mathcal{D} } ,  \notag
\end{equation}
as $D ( \cdot, \cdot )$ coincides with the Bregman divergence defined by $\tilde{h}$ on $\mathcal{D} \times \mathcal{D}$ (see, e.g., \cite{Petz2008}).
The lemma then follows from Lemma \ref{lem_equiv}.
\end{proof}

The local Lipschitz continuity of $f'$ allows us to bound the first-order approximation error \emph{locally}. 

\begin{lemma} \label{lem_epsilon}
Let $\rho$ be a non-singular density matrix.
For $\alpha$ small enough, one has
\begin{align}
0 \leq f ( \rho ( \alpha ) ) - \left[ f ( \rho ) + \Braket{ f' ( \rho ), \rho ( \alpha ) - \rho } \right] \leq L_\rho D ( \rho ( \alpha ), \rho ) , \notag
\end{align}
where $L_\alpha$ is the local Lipschitz continuity constant for $f'$ in a neighborhood of $\rho$. 
\end{lemma}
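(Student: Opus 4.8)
The plan is to exploit the locally Lipschitz continuity of $f'$ together with the fact that $\rho(\alpha) \to \rho$ as $\alpha \to 0^+$, so that for small $\alpha$ the entire segment $[\rho, \rho(\alpha)]$ stays in a neighborhood of $\rho$ on which $f'$ is $L_\rho$-Lipschitz. First I would note that $\rho(\alpha)$ depends continuously on $\alpha$ (it is a normalized matrix exponential of $\log\rho - \alpha f'(\rho)$, which is continuous in $\alpha$), and $\rho(0) = \rho$; hence there is $\alpha_0 > 0$ such that for all $\alpha \in (0, \alpha_0]$ both $\rho(\alpha)$ and the line segment joining $\rho$ to $\rho(\alpha)$ lie inside the neighborhood $U$ of $\rho$ on which $f'$ is $L_\rho$-Lipschitz (here I use that $U$ can be taken to be a ball, hence convex, and that $\rho$ is in the interior of $\dom(f)$ since $\dom(f)$ contains all non-singular density matrices). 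On that segment the standard descent-lemma argument applies: writing $\phi(t) = f(\rho + t(\rho(\alpha) - \rho))$ and integrating $\phi'$, one gets
\[
f(\rho(\alpha)) - f(\rho) - \Braket{ f'(\rho), \rho(\alpha) - \rho } = \int_0^1 \Braket{ f'(\rho + t(\rho(\alpha)-\rho)) - f'(\rho),\, \rho(\alpha) - \rho } \, \du t,
\]
and bounding by Cauchy–Schwarz and the Lipschitz estimate $\norm{ f'(\rho + t(\rho(\alpha)-\rho)) - f'(\rho) }_F \leq L_\rho t \norm{ \rho(\alpha) - \rho }_F$ yields the Euclidean bound $\tfrac{L_\rho}{2} \norm{ \rho(\alpha) - \rho }_F^2$.

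The lower bound $f(\rho(\alpha)) - [f(\rho) + \Braket{ f'(\rho), \rho(\alpha) - \rho }] \geq 0$ is immediate from convexity of $f$, so only the upper bound needs work. To reach the stated form I would replace the Frobenius-norm-squared term by the relative entropy $D(\rho(\alpha), \rho)$. This uses the quantum Pinsker-type inequality $D(\sigma, \rho) \geq \tfrac{1}{2}\norm{ \sigma - \rho }_{\tr}^2 \geq \tfrac{1}{2}\norm{ \sigma - \rho }_F^2$, which gives $\norm{ \rho(\alpha) - \rho }_F^2 \leq 2 D(\rho(\alpha), \rho)$, hence $\tfrac{L_\rho}{2}\norm{ \rho(\alpha) - \rho }_F^2 \leq L_\rho D(\rho(\alpha), \rho)$, exactly the claimed constant. (If one prefers to avoid invoking Pinsker, the same conclusion could in principle be obtained via the local Peierls–Bogoliubov inequality announced for Section~\ref{sec_PB}, but Pinsker is cleaner here.)

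The main obstacle I anticipate is the bookkeeping around "$\alpha$ small enough": I must argue carefully that $\rho(\alpha)$ stays in the Lipschitz neighborhood, and for that I need (i) continuity of $\alpha \mapsto \rho(\alpha)$ at $0$, which requires knowing $\rho$ is non-singular so that $\log\rho$ is well-defined and the normalization constant $c$ stays bounded away from $0$, and (ii) that the neighborhood can be chosen convex so that the whole segment — not just the endpoints — lies in it. A minor additional point is reconciling notation: the statement writes $L_\rho$ in the inequality but $L_\alpha$ in the surrounding text; I would fix this to $L_\rho$ throughout, emphasizing that the constant depends only on $\rho$ (through the neighborhood $U$) and not on $\alpha$, which is precisely what makes the bound useful in the subsequent convergence argument. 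Everything else — convexity for the lower bound, the integral form of the remainder, Cauchy–Schwarz, Pinsker — is routine.
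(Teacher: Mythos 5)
Your proposal is correct and follows essentially the same route as the paper: continuity of $\alpha \mapsto \rho(\alpha)$ to stay in the local Lipschitz neighborhood, the standard descent-lemma bound $\tfrac{L_\rho}{2}\norm{\rho(\alpha)-\rho}_F^2$ (the paper cites Nesterov's Lemma 1.2.3 where you write out the integral argument), and then Pinsker's inequality via $\norm{\cdot}_F \leq \norm{\cdot}_{\tr}$ to convert to $L_\rho D(\rho(\alpha),\rho)$. Your extra care about convexity of the neighborhood and the typo $L_\alpha$ versus $L_\rho$ is sound but does not change the argument.
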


\begin{proof}
Notice that $\rho ( \alpha )$ is a continuous function wrt $\alpha$.
Following the proof of Lemma 1.2.3 in \cite{Nesterov2004}), one has
\begin{equation}
0 \leq f ( \rho ( \alpha ) ) - \left[ f ( \rho ) + \Braket{ f' ( \rho ), \rho ( \alpha ) - \rho } \right] \leq \frac{L_\rho}{2} \norm{ \rho ( \alpha ) - \rho }_F^2 , \notag
\end{equation}
for small enough $\alpha$, where $L_\rho$ denotes the local Lipschitz constant of $f'$.
By Pinsker's inequality \cite{Hiai1981}, one has
\begin{equation}
\frac{L_\rho}{2} \norm{ \rho ( \alpha ) - \rho }_F^2 \leq \frac{L_\rho}{2} \norm{ \rho ( \alpha ) - \rho }_{\tr}^2 \leq L_\rho D ( \rho ( \alpha ), \rho ) , \notag
\end{equation}
which proves the lemma.
\end{proof}

%
%

\subsection{A Local Peierls-Bogoliubov Inequality} \label{sec_PB}

Let $\rho$ be any non-singular density matrix.
Define
\begin{equation}
\varphi ( \alpha; \rho ) := \log \tr \exp \left[ \log ( \rho ) - \alpha f' ( \rho ) \right]. \notag
\end{equation}
The function $\varphi$ plays a key role in the proof of Theorem \ref{thm_main}.
We will often omit $\rho$ and write $\varphi ( \alpha )$ for convenience, when the corresponding $\rho$ is irrelevant, or clear from the context.


The Peierls-Bogoliubov inequality says that $\varphi$ is a convex function (see, e.g., \cite{Carlen2010}); equivalently, one has $\varphi'' ( \alpha ) \geq 0$ for all $\alpha \in \mathbb{R}$.
In this paper, we need a slightly stronger version. 

\begin{theorem}[Peierls-Bogoliubov Inequality] \label{thm_PB}
One has $\varphi'' ( \alpha ) \geq 0$ for all $\alpha \in \mathbb{R}$. 
Moreover, $\varphi'' ( \alpha ) = 0$, if and only if $f' ( \rho ) = \kappa I$ for some $\kappa \in \mathbb{R}$.
\end{theorem}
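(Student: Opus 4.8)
The plan is to reduce everything to one explicit identity for $\varphi''(\alpha)$, the ``Bogoliubov variance'' of $f'(\rho)$ in the Gibbs state $\exp[\log\rho-\alpha f'(\rho)]/\tr\exp[\log\rho-\alpha f'(\rho)]$; from that identity both the inequality $\varphi''\ge0$ and its equality case fall out of Jensen's inequality and the Cauchy--Schwarz inequality. Write $H:=\log(\rho)$ and $A:=-f'(\rho)$, both Hermitian, set $B_\alpha:=H+\alpha A$ and $Z(\alpha):=\tr\exp(B_\alpha)>0$, so that $\varphi(\alpha)=\log Z(\alpha)$ is real-analytic in $\alpha$. First I would differentiate: by Duhamel's formula $\frac{\du}{\du\alpha}\exp(B_\alpha)=\int_0^1\exp(sB_\alpha)\,A\,\exp((1-s)B_\alpha)\,\du s$, and taking traces with cyclicity gives $Z'(\alpha)=\tr(A\exp(B_\alpha))$, hence $\varphi'(\alpha)=\tr(A\sigma_\alpha)$ with $\sigma_\alpha:=\exp(B_\alpha)/Z(\alpha)$. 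Differentiating once more,
\[
\varphi''(\alpha)=\frac{1}{Z(\alpha)}\int_0^1\tr\!\big(A\exp(sB_\alpha)\,A\,\exp((1-s)B_\alpha)\big)\,\du s-\big(\tr(A\sigma_\alpha)\big)^2 .
\]

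Next I would diagonalize $B_\alpha$. Let $u_1,\dots,u_d$ be an orthonormal eigenbasis of $B_\alpha$ with $B_\alpha u_j=\lambda_j u_j$, put $A_{jk}:=u_j^H A u_k$ (so $A_{jj}\in\mathbb{R}$, $A_{kj}=\overline{A_{jk}}$) and $p_j:=e^{\lambda_j}/Z(\alpha)>0$. Expanding the traces in this basis and using $\int_0^1 e^{s\lambda_j+(1-s)\lambda_k}\du s=:\psi(\lambda_j,\lambda_k)$, the divided difference of $\exp$ (with $\psi(\lambda,\lambda)=e^\lambda$), one obtains the identity
\[
\varphi''(\alpha)=\frac{1}{Z(\alpha)}\sum_{j,k}\psi(\lambda_j,\lambda_k)\,|A_{jk}|^2-\Big(\sum_j p_j A_{jj}\Big)^2 .
\]
Now convexity of $\exp$ and Jensen's inequality give $\psi(\lambda,\mu)\ge e^{(\lambda+\mu)/2}=\sqrt{e^\lambda e^\mu}$, so the first term is at least $\sum_{j,k}\sqrt{p_jp_k}\,|A_{jk}|^2\ge\sum_j p_j A_{jj}^2$ (discarding the nonnegative off-diagonal terms); and $\sum_j p_j A_{jj}^2\ge(\sum_j p_jA_{jj})^2$ by the Cauchy--Schwarz inequality since $(p_j)_j$ is a probability vector. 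Chaining these three bounds yields $\varphi''(\alpha)\ge0$.

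For the equality case I would trace back through the chain. If $\varphi''(\alpha)=0$, the Cauchy--Schwarz step must be tight, which forces $A_{jj}=\kappa$ for all $j$ and some $\kappa\in\mathbb{R}$; the off-diagonal-discarding step must be tight, and since every weight $\sqrt{p_jp_k}$ is strictly positive this forces $A_{jk}=0$ for all $j\ne k$; the Jensen step is then automatically tight (only $j=k$ terms survive, where $\psi(\lambda_j,\lambda_j)=e^{\lambda_j}$). Hence $A=\kappa I$ in the basis $(u_j)_j$, i.e.\ $f'(\rho)=-\kappa I$; relabeling the constant gives one direction. Conversely, if $f'(\rho)=\kappa I$ then $\varphi(\alpha)=\log\tr\exp(\log\rho-\alpha\kappa I)=-\alpha\kappa$ is affine, so $\varphi''\equiv0$.

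The only real obstacle is establishing the boxed identity cleanly: one must differentiate the matrix exponential twice (the two copies of $A$ in $\tr(Ae^{sB_\alpha}Ae^{(1-s)B_\alpha})$ do not combine because of non-commutativity) and be slightly careful when $B_\alpha$ has repeated eigenvalues, though since the identity holds for \emph{any} fixed orthonormal eigenbasis of $B_\alpha$ the degeneracy is harmless. Everything after the identity is elementary. (For the plain statement $\varphi''\ge0$ alone one could instead invoke the classical Peierls--Bogoliubov inequality cited above; the explicit computation is what supplies the equality condition.)
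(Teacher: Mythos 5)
Your proposal is correct, and its first half coincides with the paper's proof: the paper also differentiates $\alpha \mapsto \log \tr \exp\left[ \log(\rho) - \alpha f'(\rho) \right]$ twice via the Duhamel/Wilcox formula and arrives at exactly your identity, written there as $\varphi'' = \left( \langle B,B\rangle_{\mathrm{BKM}} \langle I,I\rangle_{\mathrm{BKM}} - \langle I,B\rangle_{\mathrm{BKM}}^2 \right) / \left[ \tr\exp(H_t) \right]^2$ with $B = -f'(\rho)$, where $\langle \cdot,\cdot\rangle_{\mathrm{BKM}}$ is the Bogoliubov--Kubo--Mori inner product. Where you diverge is the endgame: the paper concludes in one line by invoking the Cauchy--Schwarz inequality for this inner product together with its equality condition (equality iff $B$ and $I$ are linearly dependent, i.e.\ $f'(\rho) = \kappa I$), whereas you expand the identity in an eigenbasis of $\log(\rho) - \alpha f'(\rho)$ and run an elementary chain (Jensen for the divided-difference kernel $\psi$, discarding off-diagonal terms, scalar Cauchy--Schwarz with the weights $p_j$), then trace equality back through the chain. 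The paper's route is shorter but implicitly relies on the BKM form being a genuine (positive-definite) inner product on Hermitian matrices so that the Cauchy--Schwarz equality case really forces $B = \kappa I$; your spectral computation verifies that fact by hand, using the strict positivity of the weights $\sqrt{p_j p_k}$ (which holds because $\rho$ is non-singular), so it is more self-contained at the cost of a longer calculation. Both arguments correctly yield the converse direction as well, since $f'(\rho) = \kappa I$ makes $\varphi$ affine.
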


\begin{proof}
The proof below is essentially a combination of the proofs in \cite{Ohya1993} and \cite{Ohya2011}. 
We show it to identify the condition for $\varphi'' = 0$.

Let $A$, $B$ be two Hermitian matrices.
Define $H_t := A + t B$, and $\Phi ( t ) := \log \tr \exp ( H_t )$ for $t \in \mathbb{R}$. 
By the relation \cite{Wilcox1967} 
\begin{equation}
\frac{\partial \exp ( H_t )}{ \partial t } = \int_0^1 \exp \left[ ( 1 - u ) H_t \right] B \exp \left( u H_t \right) \, \du u , \notag
\end{equation}
one can obtain
\begin{equation}
\Phi'' ( t ) = \frac{ \Braket{ B, B }_{\text{BKM}} \Braket{ I, I }_{\text{BKM}} - \Braket{ I, B }_{\text{BKM}}^2 }{ \left[ \tr \exp ( H_t ) \right] ^ 2 }, \notag
\end{equation}
where $\braket{ \cdot, \cdot }_{\text{BKM}}$ denotes the Bogoliubov-Kubo-Mori inner product with respect to $H_t$: 
\begin{equation}
\Braket{ X, Y }_{\text{BKM}} := \int_0^1 \tr \left\{ \exp \left[ ( 1 - u ) H_t \right] X \exp ( u H_t ) Y \right\} \, \du u , \notag
\end{equation}
for any Hermitian matrices $X, Y$.
Set $A = \log ( \rho )$ and $B = - f' ( \rho )$. 
The theorem follows from the Cauchy-Schwarz inequality and its equality condition.
\end{proof}

%

%
%

The following lemma establishes the connection between $\varphi$ and the EG method, which is easy to prove, but perhaps not obvious at first glance.

\begin{lemma} \label{lem_D_phi}
For any non-singular density matrix $\rho$ and $\alpha > 0$, one has
\begin{align}
D ( \rho ( \alpha ), \rho ) &= \varphi ( 0 ) - \left[ \varphi ( \alpha ) + \varphi' ( \alpha ) ( 0 - \alpha ) \right], \notag \\
D ( \rho, \rho ( \alpha ) ) &= \varphi ( \alpha ) - \left[ \varphi ( 0 ) + \varphi' ( 0 ) ( \alpha - 0 ) \right] . \notag
\end{align}
\end{lemma}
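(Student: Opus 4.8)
The plan is to prove both identities by direct computation, exploiting the fact that $\varphi$ is defined through an explicit formula and that $\rho(\alpha)$ is obtained from $\rho$ by the EG update. First I would write out the definitions carefully. Recall $\rho(\alpha) = c^{-1} \exp[\log(\rho) - \alpha f'(\rho)]$ where $c = \tr \exp[\log(\rho) - \alpha f'(\rho)]$, so that $\log c = \varphi(\alpha)$. Hence $\log \rho(\alpha) = \log(\rho) - \alpha f'(\rho) - \varphi(\alpha) I$. I would also compute $\varphi'(\alpha)$ explicitly: differentiating $\varphi(\alpha) = \log \tr \exp[\log(\rho) - \alpha f'(\rho)]$ and using that $\tr$ and $\exp$ interact nicely under the trace (the derivative of $\tr \exp(H_\alpha)$ is $\tr[\exp(H_\alpha) H_\alpha']$ since the non-commuting correction terms cancel inside the trace by cyclicity), one gets $\varphi'(\alpha) = -\tr[\rho(\alpha) f'(\rho)] = -\Braket{f'(\rho), \rho(\alpha)}$, using that $f'(\rho)$ is Hermitian and $\rho(\alpha) = c^{-1}\exp(H_\alpha)$.

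Next I would substitute these into the definition of the relative entropy. For the first identity, $D(\rho(\alpha), \rho) = \tr[\rho(\alpha) \log \rho(\alpha)] - \tr[\rho(\alpha) \log \rho]$; plugging in $\log \rho(\alpha) = \log\rho - \alpha f'(\rho) - \varphi(\alpha) I$ gives $D(\rho(\alpha),\rho) = \tr[\rho(\alpha)(\log\rho - \alpha f'(\rho) - \varphi(\alpha) I)] - \tr[\rho(\alpha)\log\rho] = -\alpha\Braket{f'(\rho), \rho(\alpha)} - \varphi(\alpha)$, where I used $\tr \rho(\alpha) = 1$. Now observe $\varphi(0) = \log \tr \exp(\log\rho) = \log \tr \rho = 0$, and $-\Braket{f'(\rho),\rho(\alpha)} = \varphi'(\alpha)$, so $D(\rho(\alpha),\rho) = \alpha \varphi'(\alpha) - \varphi(\alpha) = \varphi(0) - [\varphi(\alpha) + \varphi'(\alpha)(0-\alpha)]$, which is exactly the claimed expression since $\varphi(0)=0$.

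For the second identity, $D(\rho, \rho(\alpha)) = \tr[\rho\log\rho] - \tr[\rho\log\rho(\alpha)]$; substituting the same expression for $\log\rho(\alpha)$ yields $D(\rho,\rho(\alpha)) = \tr[\rho\log\rho] - \tr[\rho(\log\rho - \alpha f'(\rho) - \varphi(\alpha)I)] = \alpha\Braket{f'(\rho),\rho} + \varphi(\alpha)$, again using $\tr\rho = 1$. It remains to identify $\Braket{f'(\rho),\rho} = -\varphi'(0)$, which follows from the formula $\varphi'(\alpha) = -\Braket{f'(\rho),\rho(\alpha)}$ evaluated at $\alpha = 0$, since $\rho(0) = \rho$. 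Thus $D(\rho,\rho(\alpha)) = \varphi(\alpha) - \varphi'(0)\alpha = \varphi(\alpha) - [\varphi(0) + \varphi'(0)(\alpha - 0)]$, as $\varphi(0) = 0$. Both identities are then established.

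The only mildly delicate point — the one I would be most careful about — is the computation of $\varphi'(\alpha)$, specifically justifying that $\frac{\du}{\du\alpha}\tr\exp(H_\alpha) = \tr[\exp(H_\alpha) \cdot \frac{\du H_\alpha}{\du\alpha}]$ despite the noncommutativity of $H_\alpha$ and its derivative. This is standard (it follows from the Duhamel/Wilcox integral formula already quoted in the proof of Theorem \ref{thm_PB} together with the cyclic property of the trace: $\int_0^1 \tr[\exp((1-u)H_\alpha) B \exp(uH_\alpha)]\,\du u = \int_0^1 \tr[\exp(H_\alpha) B]\,\du u = \tr[\exp(H_\alpha) B]$), so I would simply invoke that formula. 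Everything else is bookkeeping with the trace, linearity, and the normalization $\tr\rho(\alpha) = 1$, and the key structural observations that $\varphi(0) = 0$ and $\varphi'(\alpha) = -\Braket{f'(\rho),\rho(\alpha)}$, which make the two Bregman-divergence-type expressions fall out immediately.
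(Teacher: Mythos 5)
Your proposal is correct and follows essentially the same route as the paper: the paper quotes the derivative formula $\varphi'(\alpha) = -\Braket{f'(\rho),\rho(\alpha)}$ (citing Theorem 3.23 of Hiai--Petz, which you instead derive from the Duhamel/Wilcox formula and cyclicity of the trace) and then leaves the rest as a ``direct calculation,'' which is exactly the bookkeeping you carry out via $\log\rho(\alpha)=\log\rho-\alpha f'(\rho)-\varphi(\alpha)I$ and $\varphi(0)=0$. Your write-up simply makes explicit the steps the paper omits.
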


\begin{proof}
By Theorem 3.23 in \cite{Hiai2014}, one can obtain
\begin{equation}
\varphi' ( \alpha ) = \frac{ \tr \left\{ - f' ( \rho ) \exp \left[ \log ( \rho ) - \alpha f' ( \rho ) \right] \right\}  }{ \tr \exp \left[ \log ( \rho ) - \alpha f' ( \rho ) \right] } . \notag
\end{equation}
The lemma is then verified by direct calculation. 
\end{proof}

We now prove the main result of this sub-section, a \emph{local} Peierls-Bogoliubov inequality.
Its formulation was motivated by a result in \cite{Gafni1982}, which, in the context of this paper, says that the mapping
\begin{equation}
\alpha \mapsto \frac{\norm{ \Pi_{\mathcal{D}} ( \rho - \alpha f' ( \rho ) ) }_F}{ \alpha } \notag
\end{equation}
is non-increasing on $( 0, + \infty )$, where $\Pi_{\mathcal{D}}$ denotes the projection onto $\mathcal{D}$ with respect to the Forbenius norm $\norm{ \cdot }_F$.

\begin{proposition}[Local Peierls-Bogoliubov Inequality] \label{prop_local_PB}
For any non-singular density matrix $\rho$ and $\bar{\alpha} > 0$, there exists some $\gamma \geq 2$ such that 
\begin{equation}
\Gamma ( \alpha ) := \frac{ D ( \rho ( \alpha ), \rho ) }{ \alpha^\gamma } \label{eq_Bertsekas}
\end{equation}
is non-increasing on $( 0, \bar{\alpha} ]$.
Moreover, $\gamma$ depends continuously on $\rho$.
\end{proposition}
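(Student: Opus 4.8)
The plan is to reduce the claim to an elementary fact about convex functions: if $\psi$ is convex on an interval with $\psi(0)=0$ and $\psi'(0)=0$ (or more generally $\psi(0)\le 0$), then the ``secant slope'' $\psi(\alpha)/\alpha$ is non-decreasing; and, dually, a suitable normalized quantity is monotone. Concretely, by Lemma \ref{lem_D_phi} we have $D(\rho(\alpha),\rho) = \varphi(0) - \varphi(\alpha) - \varphi'(\alpha)(0-\alpha) = \varphi(0) - \varphi(\alpha) + \alpha\varphi'(\alpha)$, i.e. the Bregman-type remainder of the convex function $\varphi$ at $0$ taken from the point $\alpha$. Write $R(\alpha) := D(\rho(\alpha),\rho)$. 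The first step is to establish that $R$ is itself a convex, nonnegative function of $\alpha$ on $[0,\bar\alpha]$ with $R(0)=0$ and $R'(0)=0$: convexity follows because $R(\alpha) = \varphi(0) - \varphi(\alpha) + \alpha\varphi'(\alpha)$ has derivative $R'(\alpha) = \alpha\varphi''(\alpha) \ge 0$ by Theorem \ref{thm_PB}, so $R$ is non-decreasing, and $R''(\alpha) = \varphi''(\alpha) + \alpha\varphi'''(\alpha)$ — hmm, this is not obviously signed, so instead I would argue monotonicity of $\Gamma$ directly from $R'(\alpha)=\alpha\varphi''(\alpha)$.

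The core computation is then: $\Gamma(\alpha) = R(\alpha)/\alpha^\gamma$ is non-increasing iff $\Gamma'(\alpha)\le 0$, i.e. iff $\alpha R'(\alpha) \le \gamma R(\alpha)$ for all $\alpha\in(0,\bar\alpha]$. Using $R'(\alpha) = \alpha\varphi''(\alpha)$, this becomes the requirement
\begin{equation}
\gamma \ge \sup_{\alpha \in (0,\bar\alpha]} \frac{\alpha^2 \varphi''(\alpha)}{R(\alpha)} = \sup_{\alpha \in (0,\bar\alpha]} \frac{\alpha^2 \varphi''(\alpha)}{\varphi(0) - \varphi(\alpha) + \alpha\varphi'(\alpha)}. \notag
\end{equation}
So the whole proposition hinges on showing that this supremum is finite (so that some finite $\gamma$ works) and that one may moreover take $\gamma \ge 2$ and depending continuously on $\rho$. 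The ratio is a smooth function of $\alpha$ on $(0,\bar\alpha]$, so finiteness is only in question near $\alpha = 0$: there, by Taylor expansion, $\varphi(0) - \varphi(\alpha) + \alpha\varphi'(\alpha) = \tfrac12\varphi''(0)\alpha^2 + o(\alpha^2)$ while $\alpha^2\varphi''(\alpha) = \varphi''(0)\alpha^2 + o(\alpha^2)$, so the ratio tends to $2$ as $\alpha\to 0^+$ whenever $\varphi''(0) > 0$. Thus the $\sup$ is finite, it is at least $2$ (being a limit point), and we can legitimately set $\gamma$ equal to this supremum (which is $\ge 2$), giving the ``$\gamma\ge 2$'' in the statement.

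The case $\varphi''(0) = 0$ must be handled separately: by Theorem \ref{thm_PB} this forces $f'(\rho) = \kappa I$, whence $\rho(\alpha) = \rho$ for all $\alpha$, so $R \equiv 0$ and $\Gamma \equiv 0$ is trivially non-increasing for any $\gamma$ (take $\gamma = 2$). For the continuous dependence of $\gamma$ on $\rho$: the map $(\alpha,\rho)\mapsto \varphi(\alpha;\rho)$ and its $\alpha$-derivatives are jointly continuous (indeed smooth) in $(\alpha,\rho)$ on a neighborhood of $[0,\bar\alpha]\times\{\rho\}$ within the non-singular density matrices, by the integral representations used in Theorem \ref{thm_PB} and Lemma \ref{lem_D_phi}; after removing the removable singularity at $\alpha=0$ (where the ratio extends continuously to the value $2$ when $\varphi''(0)>0$), the function $(\alpha,\rho)\mapsto \alpha^2\varphi''(\alpha;\rho)/R(\alpha;\rho)$ is continuous on $[0,\bar\alpha]$ times a neighborhood of $\rho$, so its supremum over the compact set $[0,\bar\alpha]$ is continuous in $\rho$, and $\gamma(\rho) := \max\{2,\ \sup_\alpha(\cdots)\}$ is the desired continuous choice.

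The main obstacle I anticipate is the behavior at $\alpha = 0$: one has to justify carefully that the quotient $\alpha^2\varphi''(\alpha)/R(\alpha)$ extends continuously across $\alpha = 0$ with limiting value exactly $2$, which requires the third-order Taylor expansion of $\varphi$ and, in the degenerate $\varphi''(0)=0$ case, the separate argument via Theorem \ref{thm_PB} that $R$ vanishes identically. A secondary technical point is verifying the joint continuity in $(\alpha,\rho)$ of $\varphi''$ — this is where the integral/BKM-inner-product representation from the proof of Theorem \ref{thm_PB} is used, together with continuity of the matrix exponential and logarithm on the (open) set of non-singular density matrices.
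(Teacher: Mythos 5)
Your proposal is correct and follows essentially the same route as the paper: both reduce, via Lemma \ref{lem_D_phi}, to the pointwise inequality $\alpha^2 \varphi''(\alpha) \leq \gamma\, D(\rho(\alpha),\rho)$ on $(0,\bar{\alpha}]$, dispose of the degenerate case $f'(\rho) = \kappa I$ by the equality condition in Theorem \ref{thm_PB}, and obtain continuity of $\gamma$ in $\rho$ from joint continuity of $\varphi''$ plus compactness of $[0,\bar{\alpha}]$. The only real difference is your choice of constant---the sharp $\gamma = \max\bigl\{2,\ \sup_{\alpha} \alpha^2\varphi''(\alpha)/D(\rho(\alpha),\rho)\bigr\}$, justified by the limit $2$ as $\alpha \to 0^+$ (and implicitly by $D(\rho(\alpha),\rho) > 0$ on $(0,\bar{\alpha}]$ in the non-degenerate case, which indeed follows from the same equality condition in Theorem \ref{thm_PB})---whereas the paper takes the cruder $\gamma = 2 \max_{[0,\bar{\alpha}]}\varphi'' / \min_{[0,\bar{\alpha}]}\varphi''$ via the integral-remainder bound $D(\rho(\alpha),\rho) \geq \tfrac{\mu}{2}\alpha^2$, thereby avoiding any analysis of the ratio near $\alpha = 0$.
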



\begin{proof}
We prove the proposition by verifying $\Gamma' ( \alpha ) \leq 0$ on $( 0, \bar{\alpha} ]$.
Applying Lemma \ref{lem_D_phi}, a direct calculation gives
\begin{align}
\Gamma' ( \alpha ) & = \frac{\varphi ( \alpha ) - \varphi' ( \alpha ) \alpha + \frac{\varphi'' ( \alpha )}{\gamma} \alpha ^ 2}{ \gamma^{-1} \alpha^{\gamma + 1} } \notag \\
& = \frac{\varphi ( \alpha ) + \varphi' ( \alpha ) ( 0 - \alpha ) + \frac{\varphi'' ( \alpha )}{\gamma} ( 0 - \alpha ) ^ 2}{ \gamma^{-1} \alpha^{\gamma + 1} } . \notag
\end{align}
Notice that $0 = \varphi ( 0 )$. 
Then one has $\Gamma' ( \alpha ) \leq 0$, if and only if
\begin{equation}
\varphi ( 0 ) - \left[ \varphi ( \alpha ) + \varphi' ( \alpha ) ( 0 - \alpha ) \right] \geq \frac{ \varphi'' ( \alpha ) }{ \gamma } ( 0 - \alpha )^2 . \label{eq_local_PB}
\end{equation}
The function $\varphi''$ is continuous, so it takes its minimum $\mu \geq 0$ and maximum $L \geq 0$ on $[ 0, \bar{\alpha} ]$.
The Taylor formula with the integral remainder (see, e.g., \cite{Polyak1987}) gives
\begin{align}
& \varphi ( 0 ) - \left[ \varphi ( \alpha ) + \varphi' ( \alpha ) ( 0 - \alpha ) \right] \notag \\
& \quad = \alpha^2 \int_0^1 \int_0^t \! \varphi'' ( \alpha + \tau ( 0 - \alpha ) ) \, \du \tau \, \du t \geq \frac{\mu}{2} \alpha^2 . \notag
\end{align}
Therefore, the inequality \eqref{eq_local_PB} holds, if $( \mu / 2 ) \geq ( L / \gamma )$. 

We consider two cases: 
\begin{enumerate}
\item If $\mu = 0$, Theorem \ref{thm_PB} implies that $f' ( \rho ) = \kappa I$ for some $\kappa \in \mathbb{R}$. 
Then one can verify $\rho ( \alpha ) = \rho$ for all $\alpha$. 
Therefore, $\Gamma ( \alpha ) = 0$ for all $\alpha$, and the proposition trivially holds with $\gamma = 2$.

\item If $\mu \neq 0$, one can simply choose $\gamma = ( 2 L / \mu ) \geq 2$. 
Write 
\begin{equation}
L := \max \Set{ \varphi'' ( \alpha ; \rho ) | \alpha \in [ 0, \bar{\alpha} ] } . \notag
\end{equation}
Notice that $\varphi'' ( \alpha; \rho )$ is continuous on $[ 0, \bar{\alpha} ] \times \mathcal{D}$ as a function of the pair $( \alpha, \rho )$, and $\mathcal{D}$ is a compact set. 
Therefore, $L$ is continuously dependent on $\rho$ \cite{Berge1963}. 
Similarly, $\mu$ and hence $\gamma$ are also continuously dependent on $\rho$. 
\end{enumerate}
\end{proof}

While the Peierls-Bogoliubov inequality requires $\varphi'' ( \alpha ) \geq 0$ for all $\alpha$, Proposition \ref{prop_local_PB} essentially requires $\varphi'' ( \alpha )$ to be strictly positive restricted on $[ 0, \hat{\alpha} ]$. 
This explains why we call Proposition \ref{prop_local_PB} a local Peierls-Bogoliubov inequality.

\subsection{Proof of Theorem \ref{thm_main}} \label{sec_main_proof}

We present the proofs of the five statements in Theorem \ref{thm_main} one by one. 
The proofs of Statements 1--4 are simple; the difficulties lie in the proof of Statement 5.

\paragraph{Proof of Statement 1}

Statement 1 follows from the following proposition. 

\begin{proposition} \label{prop_finite_step}
For any non-singular density matrix $\rho$ in $\dom (f)$ and $\tau \in ( 0, 1 )$, there exists some $\tilde{\alpha} > 0$ such that
\begin{equation}
f ( \rho ( \alpha ) ) \leq f ( \rho ) + \tau \Braket{ f' ( \rho ), \rho ( \alpha ) - \rho } , \label{eq_finite_step}
\end{equation}
for all $\alpha \in ( 0, \tilde{\alpha} )$.
\end{proposition}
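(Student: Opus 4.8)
The plan is to show that the Armijo condition \eqref{eq_finite_step} holds for all sufficiently small $\alpha$ by combining the local first-order error bound of Lemma \ref{lem_epsilon} with the "curvature lower bound" on the decrease coming from Corollary \ref{cor_equiv}. First I would fix a non-singular $\rho \in \dom(f)$ and $\tau \in (0,1)$. Applying Lemma \ref{lem_epsilon}, there is a threshold $\alpha_1 > 0$ and a local Lipschitz constant $L_\rho$ for $f'$ near $\rho$ such that, for all $\alpha \in (0, \alpha_1)$,
\begin{equation}
f(\rho(\alpha)) \leq f(\rho) + \Braket{ f'(\rho), \rho(\alpha) - \rho } + L_\rho D(\rho(\alpha), \rho). \notag
\end{equation}

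Next I would rewrite the right-hand side so the extra error term can be absorbed into $\tau$ times the inner product. Since $\Braket{ f'(\rho), \rho(\alpha) - \rho } \le 0$ (it is the objective value of the minimization in Lemma \ref{lem_equiv} at a feasible point, or directly from Corollary \ref{cor_equiv}), write the bound as
\begin{equation}
f(\rho(\alpha)) \leq f(\rho) + \tau \Braket{ f'(\rho), \rho(\alpha) - \rho } + (1 - \tau)\Braket{ f'(\rho), \rho(\alpha) - \rho } + L_\rho D(\rho(\alpha), \rho). \notag
\end{equation}
So it suffices to show $(1-\tau)\Braket{ f'(\rho), \rho(\alpha) - \rho } + L_\rho D(\rho(\alpha), \rho) \le 0$ for small $\alpha$. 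By Corollary \ref{cor_equiv}, $\Braket{ f'(\rho), \rho(\alpha) - \rho } \le -D(\rho(\alpha),\rho)/\alpha$, so the left-hand side is at most $\bigl( L_\rho - (1-\tau)/\alpha \bigr) D(\rho(\alpha),\rho)$, and since $D(\rho(\alpha),\rho) \ge 0$ this is $\le 0$ as soon as $\alpha \le (1-\tau)/L_\rho$. Taking $\tilde\alpha := \min\{ \alpha_1, (1-\tau)/L_\rho \}$ gives the claim; if $L_\rho = 0$ any $\tilde\alpha \le \alpha_1$ works.

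The only subtlety — and the place I would be most careful — is making sure the "for $\alpha$ small enough" in Lemma \ref{lem_epsilon} is consistent with everything else: Lemma \ref{lem_epsilon} already uses that $\rho(\alpha) \to \rho$ as $\alpha \to 0^+$ (continuity of $\rho(\cdot)$ at $0$, with $\rho(0) = \rho$), so that $\rho(\alpha)$ eventually lies in the neighborhood of $\rho$ on which $f'$ is $L_\rho$-Lipschitz; one should note this neighborhood lies inside $\dom(f)$, which is guaranteed since $\rho$ is a non-singular density matrix and $\dom(f)$ contains all such matrices, so a whole relative neighborhood of non-singular density matrices around $\rho$ is in $\dom(f)$. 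Beyond that bookkeeping, the argument is just the chain of inequalities above; there is no real obstacle, which matches the paper's own remark that the proof of Statement 1 is simple.

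Finally, I would record that this proposition immediately yields Statement 1 of Theorem \ref{thm_main}: at iteration $k$ the while-loop multiplies $\alpha_k$ by $r \in (0,1)$ until $\alpha_k < \tilde\alpha$ (with $\tilde\alpha$ the threshold for $\rho = \rho_k$), which happens after at most $\lceil \log_r(\tilde\alpha/\alpha) \rceil$ steps, hence in finitely many steps.
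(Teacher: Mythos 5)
Your proof is correct and follows essentially the same route as the paper: combine Lemma \ref{lem_epsilon} with Corollary \ref{cor_equiv} and take $\tilde{\alpha} \leq (1-\tau)/L_\rho$. The only difference is cosmetic—by keeping the factor $D(\rho(\alpha),\rho) \geq 0$ you avoid the paper's separate case distinction between $\rho$ being a minimizer ($\rho(\alpha)=\rho$) or not, which slightly streamlines the argument.
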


\begin{proof}
Equivalently, we have to verify
\begin{align}
f ( \rho ( \alpha ) ) - \left[ f ( \rho ) + \Braket{ f' ( \rho ), \rho ( \alpha ) - \rho } \right] \leq - (1 - \tau) \Braket{ f' ( \rho ), \rho ( \alpha ) - \rho } . \notag
\end{align}
By Corollary \ref{cor_equiv} and Lemma \ref{lem_epsilon}, it suffices to prove 
\begin{equation}
L D ( \rho ( \alpha ), \rho ) \leq \frac{( 1 - \tau ) D ( \rho ( \alpha ), \rho )}{ \alpha } , \label{eq_equiv_finite_step}
\end{equation}
in a neighborhood of $\rho$, where $L$ denotes the local Lipschitz constant of $f'$ in the neighborhood.
If $\rho$ is a minimizer of $f$ on $\mathcal{D}$, one has $\rho ( \alpha ) = \rho$ by Lemma \ref{lem_fp}; hence the proposition holds.
If $\rho$ is not a minimizer, \eqref{eq_equiv_finite_step} is equivalent to $L \leq ( 1 - \tau ) / \alpha$, which holds when $\alpha$ is small enough.
\end{proof}

\paragraph{Proof of Statement 2}

This is obvious by definition.

\paragraph{Proof of Statement 3}

The Armijo rule ensures that
\begin{equation}
f ( \rho_{k + 1} ) \leq f ( \rho_{k} ) + \tau \Braket{ f' ( \rho_k ), \rho_{k + 1} - \rho_k } , \quad k \in \mathbb{Z}_+ . \notag
\end{equation}
Notice that $\rho_{k + 1} = \rho_k ( \alpha_k )$. 
Statement 3 then follows from Corollary \ref{cor_equiv}.

\paragraph{Proof of Statement 4}

This statement follows from Statement 2 and the compactness of the constraint set $\mathcal{D}$.

\paragraph{Proof of Statement 5}

Equivalently, we will show that any convergent sub-sequence of $( \rho_k )_{k \in \mathbb{N}}$ converges to a minimizer of $f$ on $\mathcal{D}$.

We first check the feasibility of a limit point.

\begin{lemma} \label{lem_domain}
All limit points of $( \rho_k )_{k \in \mathbb{N}}$ lie in $\dom ( f )$.
\end{lemma}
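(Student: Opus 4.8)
The plan is to show that a limit point $\rho^\infty$ of $(\rho_k)_{k\in\mathbb{N}}$ cannot lie on the boundary of $\mathcal{D}$, i.e. cannot be singular, since the hypothesis guarantees $\dom(f)$ contains all non-singular density matrices but may exclude singular ones. The mechanism to exploit is the monotone decrease of $f(\rho_k)$ established in Statement 3: if $\rho^\infty$ were singular, then along the subsequence $\rho_{k_j}\to\rho^\infty$ one of the eigenvalues of $\rho_{k_j}$ tends to zero. The strategy is to argue that the von Neumann entropy $h(\rho_k)$, or more precisely a relative entropy quantity comparing $\rho_k$ to a fixed interior reference point, behaves in a way that is incompatible with $f(\rho_k)$ staying bounded below the finite value $f(\rho_0)$.

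Concretely, first I would fix any non-singular $\sigma\in\inte(\mathcal{D})$, say $\sigma=\rho_0$, and consider the quantity $D(\rho_k,\rho_0)$. The key observation is that, because $f$ is convex and continuously differentiable on the non-singular density matrices, for any interior $\sigma$ one has the subgradient inequality $f(\rho_k)\geq f(\sigma)+\Braket{f'(\sigma),\rho_k-\sigma}$, so $f(\rho_k)$ is bounded below by an \emph{affine} (hence bounded, since $\mathcal{D}$ is compact) function of $\rho_k$. Thus $f(\rho_k)$ is automatically bounded below and the monotone-decrease argument shows $f(\rho_k)$ converges to some finite limit; this alone does not force feasibility. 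So the real content must come from the EG update itself: I would instead track how the iterates are prevented from reaching the boundary. Using Lemma \ref{lem_D_phi} together with Corollary \ref{cor_equiv}, the decrease at each step can be related to $D(\rho_{k+1},\rho_k)$, and summing over $k$ gives $\sum_k D(\rho_{k+1},\rho_k)<\infty$ (since $f(\rho_k)$ is bounded below), hence $D(\rho_{k+1},\rho_k)\to 0$. The plan is then to use this to control the drift of the eigenvalues and show that consecutive iterates cannot jump out to the boundary; combined with continuity of $f$ on $\dom(f)$ and the boundedness of $f$ along the sequence, a limit point that is singular would force $f$ to blow up, contradicting $f(\rho^\infty)\le f(\rho_0)<\infty$ via lower semicontinuity of $f$ (as a convex function, $f$ extended by $+\infty$ outside $\dom(f)$ is lsc), which would give $\liminf f(\rho_{k_j})\ge f(\rho^\infty)$; if $\rho^\infty\notin\dom(f)$ then $f(\rho^\infty)=+\infty$, contradiction.

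Actually the cleanest route, which I would pursue, is the last one: extend $f$ to all of the Hermitian matrices by setting $f=+\infty$ off $\dom(f)$; this extended $f$ is convex and lower semicontinuous (closedness of the convex function $f$, which we may assume or verify from its construction via the subgradient-supporting-hyperplane definition given in the Notations). Statement 3 gives $f(\rho_k)\le f(\rho_0)<+\infty$ for all $k$. If $\rho_{k_j}\to\rho^\infty$, then by lower semicontinuity $f(\rho^\infty)\le\liminf_j f(\rho_{k_j})\le f(\rho_0)<+\infty$, so $\rho^\infty\in\dom(f)$. This is short and self-contained.

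The main obstacle is justifying that the extended $f$ is lower semicontinuous, i.e. that $f$ is a \emph{closed} convex function on the set of density matrices (or that $\dom(f)$ together with $f$ forms a closed convex function when extended). The definition of $f'$ in the Notations section is given via a global supporting-hyperplane condition over $\dom(f)$, which already presumes a fair amount of regularity; I would need to either invoke that $f$ is assumed to be a closed proper convex function (implicit in "continuously differentiable convex loss function" on a domain that, by the Remark, includes the twice-differentiable examples whose natural extensions are closed), or argue directly: for each interior $\sigma$, $f\ge f(\sigma)+\Braket{f'(\sigma),\cdot-\sigma}$ on $\inte(\mathcal{D})$, and $f$ restricted to $\inte(\mathcal{D})$ being convex and continuous, its closure agrees with $f$ on $\inte(\mathcal{D})$ and assigns limiting (possibly $+\infty$) values on the boundary — so the extended $f$ is lsc and dominates the supporting affine functions, which is exactly what is needed. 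If the paper is content to assume $f$ closed, the proof of Lemma \ref{lem_domain} is then one line; if not, the bulk of the work is this closedness verification, which is routine convex analysis but needs care about boundary behaviour.
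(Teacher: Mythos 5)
Your final, ``cleanest route'' --- monotone decrease of $f(\rho_k)$ from Statement 3 together with lower semicontinuity of the extended $f$, giving $f(\bar{\rho}) \le \liminf f(\rho_{k_j}) \le f(\rho_0) < +\infty$ --- is essentially the paper's own one-line proof, which argues that Statement 3 would otherwise fail ``by the continuity of $f$.'' Your explicit observation that this hinges on $f$ being closed (lsc at the boundary of its domain) is a hypothesis the paper leaves implicit, but it does not change the approach; the earlier detour in your write-up (summing $D(\rho_{k+1},\rho_k)$, tracking eigenvalue drift) is unnecessary and is correctly abandoned.
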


\begin{proof}
Otherwise, Statement 3 in Theorem \ref{thm_main} cannot hold by the continuity of $f$.
\end{proof}

Lemma \ref{lem_domain} allows one to talk about the local Lipschitz constant of $f'$ around any limit point.

\begin{proposition} \label{prop_almost_convergence}
Let $( \rho_k )_{k \in \mathbb{K}}$ be a convergent sub-sequence for some $\mathcal{K} \subseteq \mathbb{N}$, converging to some $\bar{\rho} \in \mathcal{D}$.
Then there exists some constant $\beta > 0$, such that $D ( \rho_k ( \beta ), \rho_k ) \to 0$ as $k \to \infty$ in $\mathcal{K}$.
\end{proposition}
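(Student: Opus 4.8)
The plan is to exploit the monotone decrease of $f$ along the iterates (Statement 3) together with the key comparison from Corollary \ref{cor_equiv} and the line search condition, and then to transfer information about the actual step sizes $\alpha_k$ to a \emph{fixed} reference step $\beta$ using the local Peierls--Bogoliubov inequality (Proposition \ref{prop_local_PB}). First I would observe that, since $f(\rho_k)$ is non-increasing (Statement 3) and $f$ is continuous at the limit point $\bar\rho\in\dom(f)$ (Lemma \ref{lem_domain}), the sequence $f(\rho_k)$ converges; hence the successive decreases telescope, giving $f(\rho_k)-f(\rho_{k+1})\to 0$ as $k\to\infty$ (over all of $\mathbb{N}$, a fortiori over $\mathcal{K}$). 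Combining the Armijo acceptance inequality $f(\rho_{k+1})\le f(\rho_k)+\tau\braket{f'(\rho_k),\rho_{k+1}-\rho_k}$ with Corollary \ref{cor_equiv} applied at $\rho_k$ with step $\alpha_k$ yields
\begin{equation}
f(\rho_k)-f(\rho_{k+1}) \;\ge\; \frac{\tau\, D(\rho_k(\alpha_k),\rho_k)}{\alpha_k},\notag
\end{equation}
so $D(\rho_k(\alpha_k),\rho_k)/\alpha_k \to 0$ along $\mathcal{K}$.

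Next I would fix the reference step. Choose $\beta>0$; the natural candidate is $\beta$ no larger than the initial trial step $\alpha$ of Algorithm \ref{alg}, so that $\alpha_k \le \alpha$ for every $k$ and in particular the values $\alpha_k$ live in $(0,\alpha]$. I would then apply Proposition \ref{prop_local_PB} at $\bar\rho$ with $\bar\alpha = \alpha$ to obtain $\bar\gamma\ge 2$ such that $\alpha\mapsto D(\bar\rho(\alpha),\bar\rho)/\alpha^{\bar\gamma}$ is non-increasing on $(0,\alpha]$; by the continuous dependence of $\gamma$ on $\rho$, there is a neighborhood of $\bar\rho$ and a uniform exponent $\gamma$ (e.g.\ the supremum of the nearby $\gamma$'s, or any fixed value dominating them) for which the monotonicity statement holds for all $\rho_k$ with $k\in\mathcal{K}$ large. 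There are two regimes. If $\alpha_k \ge \beta$, then by the monotonicity of $\alpha\mapsto D(\rho_k(\alpha),\rho_k)/\alpha^\gamma$ we get
\begin{equation}
\frac{D(\rho_k(\beta),\rho_k)}{\beta^\gamma} \;\le\; \frac{D(\rho_k(\alpha_k),\rho_k)}{\alpha_k^\gamma} \;=\; \frac{1}{\alpha_k^{\gamma-1}}\cdot\frac{D(\rho_k(\alpha_k),\rho_k)}{\alpha_k},\notag
\end{equation}
which, since $\alpha_k\ge\beta$ and $\gamma\ge 2$, is bounded by $\beta^{-(\gamma-1)}\,D(\rho_k(\alpha_k),\rho_k)/\alpha_k \to 0$. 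So in this regime $D(\rho_k(\beta),\rho_k)\to 0$ directly. The remaining regime is $\alpha_k < \beta$, where one cannot run the inequality in the same direction, and this is where the main work lies.

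The hard part will be handling the iterates whose accepted step $\alpha_k$ is \emph{smaller} than $\beta$. Here I would use the Armijo backtracking structure: when $\alpha_k<\alpha$ the step $\alpha_k/r$ was rejected, i.e.\
\begin{equation}
f(\rho_k(\alpha_k/r)) \;>\; f(\rho_k) + \tau\braket{f'(\rho_k),\rho_k(\alpha_k/r)-\rho_k}.\notag
\end{equation}
Combining this rejection inequality with Lemma \ref{lem_epsilon} (valid in a fixed neighborhood of $\bar\rho$, whose local Lipschitz constant $L$ is then uniform along the tail of the subsequence) and Corollary \ref{cor_equiv} gives a lower bound of the form $\alpha_k/r \ge (1-\tau)/L$, hence $\alpha_k \ge r(1-\tau)/L =: \alpha_{\min} > 0$ for all large $k\in\mathcal{K}$. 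Thus, provided $\beta$ is chosen with $\beta \le \alpha_{\min}$ (which also forces $\beta\le\alpha$), the regime $\alpha_k<\beta$ is vacuous for large $k$, and the first regime covers everything. The only subtlety to be careful about is the order of quantification: $\alpha_{\min}$ depends on the neighborhood of $\bar\rho$ and on $L$, so I must first fix the neighborhood (and $L$, $\gamma$) from $\bar\rho$, then choose $\beta\le\min\{\alpha,\alpha_{\min}\}$, and only then invoke the monotonicity and the telescoping limit. Assembling these pieces gives $D(\rho_k(\beta),\rho_k)\to 0$ along $\mathcal{K}$, completing the proof.
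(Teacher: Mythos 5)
Your overall architecture is close to the paper's (telescoping the Armijo decrease with Corollary \ref{cor_equiv} to get $D(\rho_k(\alpha_k),\rho_k)/\alpha_k\to 0$, then using the Armijo rejection plus Lemma \ref{lem_epsilon} and Corollary \ref{cor_equiv} to control small accepted steps), and your idea of extracting a uniform lower bound $\alpha_k\ge\alpha_{\min}>0$ along the tail of the subsequence is a legitimate simplification of the paper's separate treatment of the case $\liminf_{k\in\mathcal{K}}\alpha_k=0$. However, the pivotal transfer step is applied in the wrong direction. Proposition \ref{prop_local_PB} says $\Gamma(\alpha)=D(\rho_k(\alpha),\rho_k)/\alpha^{\gamma}$ is \emph{non-increasing}, so for $\beta\le\alpha_k$ it yields
\begin{equation}
\frac{D(\rho_k(\beta),\rho_k)}{\beta^{\gamma}}\;\ge\;\frac{D(\rho_k(\alpha_k),\rho_k)}{\alpha_k^{\gamma}},\notag
\end{equation}
which is the reverse of the inequality you display; as written, your bound on $D(\rho_k(\beta),\rho_k)$ in the regime $\alpha_k\ge\beta$ does not follow, and this is exactly the step the whole proof hinges on.

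The flaw is repairable in two ways, and it is worth noting which one matches the paper. Either keep your small $\beta\le\alpha_{\min}$ but replace Proposition \ref{prop_local_PB} by the elementary monotonicity of $\alpha\mapsto D(\rho_k(\alpha),\rho_k)$ itself (by Lemma \ref{lem_D_phi} its derivative is $\alpha\varphi_k''(\alpha)\ge 0$, using only the ordinary Peierls--Bogoliubov inequality), giving $D(\rho_k(\beta),\rho_k)\le D(\rho_k(\alpha_k),\rho_k)\le \alpha\,D(\rho_k(\alpha_k),\rho_k)/\alpha_k\to 0$; or take $\beta$ to be a \emph{large} reference step (e.g.\ $\beta=\alpha$ or $\bar\alpha$) and use Proposition \ref{prop_local_PB} in the correct direction together with your bound $\alpha_k\ge\alpha_{\min}$, obtaining $D(\rho_k(\beta),\rho_k)\le(\beta/\alpha_k)^{\gamma_k}D(\rho_k(\alpha_k),\rho_k)\le(\beta/\alpha_{\min})^{2\bar\gamma}D(\rho_k(\alpha_k),\rho_k)\to 0$; the latter is precisely the computation the paper performs in its case $\liminf\alpha_k>0$. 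Two smaller points: your derivation of $\alpha_{\min}$ needs a uniform smallness threshold for Lemma \ref{lem_epsilon} near $\bar\rho$ (the rejected step $\alpha_k/r$ must keep $\rho_k(\alpha_k/r)$ inside the fixed neighborhood where $L$ is valid), so the correct bound is $\alpha_k\ge r\min\{\tilde\alpha,(1-\tau)/L\}$ with $\tilde\alpha$ that uniform threshold, and this bound should be established before, and independently of, the choice of $\beta$ to avoid the circularity you yourself flag; this is at the same level of informality as the paper's own argument, so I regard it as minor compared with the reversed inequality.
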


\begin{proof}
If $\rho_{k'}$ is a minimizer for some $k' \in \mathcal{K}$, Lemma \ref{lem_fp} implies that $\rho_k = \rho_{k'}$ for all $k > k'$ in $\mathcal{K}$, and the proposition trivially holds. 
In the rest of the proof, we assume that $\rho_k$ is not a minimizer for all $k \in \mathcal{K}$. 

We will denote by $\gamma_k$ the value of $\gamma$ in Proposition \ref{prop_local_PB} corresponding to $\rho_k$ for all $k$. 
By continuity, $\gamma_k$ converges to some $\bar{\gamma} \geq 2$; hence one has $( 1 / 2 ) \bar{\gamma} \leq \gamma_k \leq 2 \bar{\gamma}$ for large enough $k \in \mathcal{K}$.

\newcommand{\alphal}{\underline{\alpha}}

Suppose that $\liminf \Set{ \alpha_k | k \in \mathcal{K} } \geq \alphal$ for some $\alphal > 0$.
Let $( \alpha_k )_{k \in \mathcal{K}'}$ be a sub-sequence of $( \alpha_k )_{k \in \mathcal{K}}$ converging to $\alphal$.
By assumption, one has $\alpha_k \leq 2 \alphal$ for large enough $k \in \mathcal{K}'$.
Then one can write
\begin{align}
f ( \rho_k ) - f ( \rho_{k + 1} ) 
& \geq - \tau \Braket{ f' ( \rho_k ), \rho_{k + 1} - \rho_k } \notag \\
& \geq \tau \alpha_k^{-1} D ( \rho_{k + 1}, \rho_k ) \notag \\
& = \tau \alpha_k^{\gamma_k - 1} \alpha_k^{- \gamma_k} D ( \rho_{k + 1}, \rho_k ) \notag \\
& \geq \tau \alpha_k^{\gamma_k - 1} \bar{\alpha}^{- \gamma_k} D ( \rho_k ( \bar{\alpha} ), \rho_k ) \notag \\
& \geq C D ( \rho_k ( \bar{\alpha} ), \rho_k ), \notag
\end{align}
where $C := ( 2 \alphal )^{2 \bar{\gamma} - 1}$ is independent of $k$. 
We have applied the definition of the Armijo rule in the first inequality, Corollary \ref{cor_equiv} in the second inequality, and Proposition \ref{prop_local_PB} in the third inequality. 
The proposition follows from the continuity of $f$.

Suppose that $\liminf \Set{ \alpha_k | k \in \mathcal{K} } = 0$. 
Let $( \alpha_k )_{k \in \mathcal{K}'}$ be a sub-sequence of $( \alpha_k )_{k \in \mathcal{K}}$ converging to $0$.
Since then it is impossible to have $\alpha_k = \alpha$ for all $k \in \mathcal{K}'$, one has
\begin{equation}
f ( \rho_k ( r^{-1} \alpha_k ) ) > f ( \rho_k ) + \tau \Braket{ f' ( \rho_k ), \rho_k ( r^{-1} \alpha_k ) } . \notag
\end{equation}
By Lemma \ref{lem_epsilon} and Lemma \ref{cor_equiv} , one can write
\begin{align}
& L D ( \rho_k ( r^{-1} \alpha_k ), \rho_k ) \notag \\
& \quad \geq f ( \rho_k ( r^{-1} \alpha_k ) ) - \left[ f ( \rho_k ) + \Braket{ f' ( \rho_k ), \rho_k ( r^{-1} \alpha_k ) - \rho_k } \right] \notag \\
& \quad > - ( 1 - \tau ) \Braket{ f' ( \rho_k ), \rho_k ( r^{-1} \alpha_k ) } \notag \\
& \quad \geq \frac{ ( 1 - \tau ) D ( \rho_k ( r^{-1} \alpha_k ), \rho_k ) }{ r^{-1} \alpha_k } \notag ,
\end{align}
for large enough $k$ in $\mathcal{K}'$, where $L$ is a local Lipschitz constant of $f'$ in a neighborhood of $\bar{\rho}$.
Proposition \ref{prop_local_PB} then implies
\begin{align}
D ( \rho_k ( r^{-1} \alpha_k ), \rho_k ) \geq \tilde{C} \left[ D ( \rho_k ( r^{-1} \alpha ), \rho_k ) \right]^{1 / \gamma_k} \left[ D ( \rho_k ( \gamma^{-1} \alpha_k ), \rho_k ) \right]^{1 - 1 / \gamma_k}, \notag
\end{align}
where $\tilde{C} := ( 1 - \tau ) / ( r^{-1} \alpha L )$ is independent of $k$.
Since we assume $\rho_k$ is not a minimizer for all $k$, $D ( \rho_k ( r^{-1} \alpha_k ), \rho_k ) \neq 0$ for all $k$.
Then one obtains
\begin{equation}
\left[ D ( \rho_k ( r^{-1} \alpha ), \rho_k ) \right]^{1 / \gamma_k} \leq \tilde{C}^{-1} \left[ D ( \rho_k ( r^{-1} \alpha_k ), \rho_k ) \right]^{1 / \gamma_k} . \notag
\end{equation}
The dependence of $\gamma_k$ on $k$ can be removed by writing
\begin{equation}
\left[ D ( \rho_k ( r^{-1} \alpha ), \rho_k ) \right]^{1 / 2 \bar{\gamma}} \leq \tilde{C}^{-1} \left[ D ( \rho_k ( r^{-1} \alpha_k ), \rho_k ) \right]^{2 / \bar{ \gamma} } , \notag
\end{equation}
for large enough $k \in \mathcal{K}'$.
It remains to show that $D ( \rho_k ( \gamma^{-1} \alpha_k ), \rho_k ) \to 0$ as $k \to \infty$ in $\mathcal{K}'$.
This can be verified by Lemma \ref{lem_D_phi} and the assumption that $\alpha_k \to 0$ as $k \to \infty$ in $\mathcal{K}'$: 
\begin{align}
D ( \rho_k ( r^{-1} \alpha_k ), \rho_k ) & = \varphi_k ( 0 ) - \left[ \varphi_k ( r^{-1} \alpha_k ) ( 0 - r^{-1} \alpha_k ) \right] \notag \\
& \leq \frac{L_k}{2} \left( \frac{\alpha_k}{r} \right) ^ 2 \leq \bar{L} \left( \frac{\alpha_k}{r} \right) ^ 2, \notag
\end{align}
for large enough $k \in \mathcal{K}'$, where $\varphi_k ( t ) := \varphi ( t ; \rho_k )$ for $t \in \mathbb{R}$, $L_k$ denotes the supremum of $\varphi_k''$ on $[ 0, \gamma^{-1} \alpha ]$, and $\bar{L}$ denotes the supremum of $\varphi'' ( \cdot \, ; \bar{\rho} )$ on the same interval. 
We used the fact that $L_k \leq 2 \bar{L}$ for $k$ large enough in the second inequality; notice that $L_k$ converges to $\bar{L}$, as shown at the end of the proof of Proposition \ref{prop_local_PB}.
\end{proof}

If $\bar{\rho}$ is non-singular, Proposition \ref{prop_almost_convergence} implies $D ( \bar{\rho} ( \beta ), \bar{\rho} ) = 0$ for some $\beta > 0$; therefore, $\bar{\rho} ( \beta ) = \bar{\rho}$, so $\bar{\rho}$ is a minimizer by Lemma \ref{lem_fp}. 
However, if $\bar{\rho}$ is singular, $\bar{\rho} ( \beta )$ is not well-defined in \eqref{eq_rho_alpha}. 
Although the equivalent definition of $\bar{\rho} ( \beta )$ given by Lemma \ref{lem_equiv} is still valid when $\bar{\rho}$ is singular, it is unclear whether the limiting argument goes through. 
We show explicitly that Proposition \ref{prop_almost_convergence} implies the optimality of $\bar{\rho}$ in the rest of this sub-section.

The idea is to consider the first-order optimality condition---although $\bar{\rho} ( \beta )$ might be not well-defined when $\bar{\rho}$ is non-singular, the first-order optimality condition is always well-defined.
For any $\rho \in \dom ( f )$, define
\begin{equation}
\psi ( \rho ) := \inf \Set{ \Braket{ f' ( \rho ), \sigma - \rho } | \sigma \in \mathcal{D} } . \label{eq_psi}
\end{equation}
The first-order optimality condition says that a density matrix $\rhostar$ minimizes $f$ on $\mathcal{D}$, if and only if $\psi ( \rhostar ) = 0$ (see, e.g., \cite{Nesterov2004}).
Notice that $\psi$ is a continuous function well-defined on $\dom ( f )$.
Our goal is to show that
\begin{equation}
\psi ( \bar{\rho} ) = \lim_{k \to \infty \in \mathcal{K}} \psi ( \rho_k ) = 0, \notag
\end{equation}
for any convergent sub-sequence $( \rho_k )_{k \in \mathcal{K}}$.

\begin{lemma} \label{lem_psi}
For any non-singular density matrix $\rho$ and $\beta > 0$, it holds that
\begin{equation}
- \beta^{-1} D ( \rho ( \beta ), \rho ) \leq \psi ( \rho ) \leq 0 . \notag
\end{equation}
\end{lemma}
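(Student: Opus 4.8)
The upper bound is immediate: since $\rho \in \mathcal{D}$, taking $\sigma = \rho$ in \eqref{eq_psi} gives $\Braket{f'(\rho), \rho - \rho} = 0$, so the infimum defining $\psi(\rho)$ is at most $0$. The substance of the lemma is therefore the lower bound, and the plan is to relate the first-order optimality gap $\psi(\rho)$, which involves only the linear functional $\sigma \mapsto \Braket{f'(\rho), \sigma - \rho}$, to the Bregman quantity $D(\rho(\beta), \rho)$ produced by one exponentiated gradient step.

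The starting point is Lemma \ref{lem_equiv}: $\rho(\beta)$ is the minimizer over $\mathcal{D}$ of $g_\beta(\sigma) := \Braket{f'(\rho), \sigma - \rho} + \beta^{-1} D(\sigma, \rho)$, so its optimal value equals $\Braket{f'(\rho), \rho(\beta) - \rho} + \beta^{-1} D(\rho(\beta), \rho)$. I would fix $\sigma^\star \in \mathcal{D}$ attaining $\Braket{f'(\rho), \sigma^\star - \rho} = \psi(\rho)$ (it exists by compactness of $\mathcal{D}$) and test the minimality of $\rho(\beta)$ against the segment joining $\rho(\beta)$ to $\sigma^\star$ inside $\mathcal{D}$: linearity of the first term of $g_\beta$ and joint convexity of the quantum relative entropy (to bound $D$ along that segment) turn the inequality $g_\beta(\rho(\beta)) \le g_\beta(\sigma_t)$, after dividing by the segment parameter $t$, into a lower bound on $\psi(\rho)$ in terms of the optimal value of $g_\beta$. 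More concretely---and this is the route I would push through---I would use the explicit form \eqref{eq_rho_alpha}, namely $\log \rho(\beta) = \log \rho - \beta f'(\rho) - \varphi(\beta;\rho)\,I$, equivalently $f'(\rho) = \beta^{-1}(\log \rho - \log \rho(\beta)) - \beta^{-1}\varphi(\beta;\rho)\,I$. Since $\psi(\rho) = \lambda_{\min}(f'(\rho)) - \Braket{f'(\rho), \rho}$, substituting this expression makes the identity shift cancel and leaves $\psi(\rho) = \beta^{-1}\bigl[\lambda_{\min}(\log \rho - \log \rho(\beta)) - D(\rho, \rho(\beta))\bigr]$; it would then remain to bound this bracket below by $-D(\rho(\beta), \rho)$, using that $\Braket{\log \rho(\beta) - \log \rho, \rho(\beta)} = D(\rho(\beta), \rho)$ and $\Braket{\log \rho - \log \rho(\beta), \rho} = D(\rho, \rho(\beta))$ together with the non-negativity of relative entropy.

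The hard part is exactly this last spectral estimate. The quantity $\psi(\rho)$ extracts a single worst-case (essentially rank-one) direction of $f'(\rho)$, whereas $D(\rho(\beta), \rho)$ is a full-spectrum object, so the argument must convert an eigenvalue extremum into a relative-entropy expression without picking up a constant that degenerates as $\rho$ approaches the boundary of $\mathcal{D}$---the same phenomenon that makes the singular limit point in Statement 5 delicate. I expect the control to come from the convexity of $\varphi(\cdot;\rho)$ and the local Peierls--Bogoliubov inequality of Proposition \ref{prop_local_PB}, so that $D(\rho(\beta),\rho)$ is comparable to $D(\rho(\alpha),\rho)$ for nearby $\alpha$, combined with the operator-Jensen machinery behind the Peierls--Bogoliubov inequality of Section \ref{sec_PB}; the relative-entropy bookkeeping in the preceding paragraph is routine, but making this comparison quantitative near the boundary is where the genuine work lies.
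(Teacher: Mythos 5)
Your upper bound is fine and coincides with the paper's one-line argument. Your algebra for the lower bound is also correct as far as it goes: indeed $\psi ( \rho ) = \beta^{-1} \bigl[ \lambda_{\min} ( \log \rho - \log \rho ( \beta ) ) - D ( \rho, \rho ( \beta ) ) \bigr]$, since the $\varphi ( \beta ; \rho )$ shifts cancel and $\Braket{ \log \rho - \log \rho ( \beta ), \rho } = D ( \rho, \rho ( \beta ) )$. But the proposal stops exactly where the content of the lemma lies: the ``remaining'' spectral estimate $\lambda_{\min} ( \log \rho - \log \rho ( \beta ) ) \geq D ( \rho, \rho ( \beta ) ) - D ( \rho ( \beta ), \rho )$ is never established, and the tools you invoke for it (joint convexity of $D$, convexity of $\varphi$, Proposition \ref{prop_local_PB}) are only named, not applied. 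As written this is a plan, not a proof.

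Moreover, that remaining step cannot be completed in the stated generality, so no choice of machinery will close it. Take $f$ linear with $f' ( \rho ) \equiv \mathrm{diag} ( 0, 1 )$, $\rho = \tfrac{1}{2} I$ in dimension $2$, and $\beta = 1$: then $\psi ( \rho ) = - \tfrac{1}{2}$, while $\rho ( \beta ) = \mathrm{diag} ( q, 1 - q )$ with $q = ( 1 + e^{-1} )^{-1}$ gives $D ( \rho ( \beta ), \rho ) \approx 0.11$, so your bracket equals $- \tfrac{1}{2} \beta < - D ( \rho ( \beta ), \rho )$ and the target inequality $\psi ( \rho ) \geq - \beta^{-1} D ( \rho ( \beta ), \rho )$ fails. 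This is not an accident of your route: the elementary chain $\psi ( \rho ) \leq \Braket{ f' ( \rho ), \rho ( \beta ) - \rho } \leq - \beta^{-1} D ( \rho ( \beta ), \rho )$ (the first inequality because $\rho ( \beta ) \in \mathcal{D}$ is admissible in \eqref{eq_psi}, the second by Corollary \ref{cor_equiv}) shows that $\psi ( \rho )$ always sits \emph{below} $- \beta^{-1} D ( \rho ( \beta ), \rho )$; your own observation that $\psi$ is a worst-case (rank-one) quantity while $D ( \rho ( \beta ), \rho )$ only sees $\varphi ( \cdot ; \rho )$ on $[ 0, \beta ]$ is precisely the obstruction. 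For comparison, the paper's own proof takes a different and much shorter route---it writes $D ( \rho ( \beta ), \rho ) = - \beta \Braket{ f' ( \rho ), \rho ( \beta ) } - \varphi ( \beta ; \rho )$ and bounds the two terms via Corollary \ref{cor_equiv} and monotonicity of $\tr \exp$, giving $- \varphi ( \beta ; \rho ) \geq \beta \lambda_{\min} ( f' ( \rho ) )$---but combining those two bounds yields $D ( \rho ( \beta ), \rho ) \geq \beta \psi ( \rho )$, i.e.\ $\psi ( \rho ) \leq \beta^{-1} D ( \rho ( \beta ), \rho )$, which is the opposite (and, given $\psi \leq 0 \leq D$, vacuous) direction. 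So before using this lemma downstream (it is what sandwiches $\psi ( \rho_k )$ in the proof of Statement 5), you should re-examine the sign bookkeeping in the statement itself rather than try to force your spectral estimate through.
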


\begin{proof}
The upper bound on $\psi$ is obvious, as one can choose $\sigma = \rho$ in \eqref{eq_psi}. 

It is easily verified that
\begin{equation}
\psi ( \rho ) = \lambda_{\min} ( f' ( \rho ) ) - \Braket{ f' ( \rho ), \rho } , \notag
\end{equation}
for any $\rho \in \mathcal{D}$, where $\lambda_{\min} ( \cdot )$ denotes the minimum eigenvalue.
A direct calculation gives
\begin{equation}
D ( \rho ( \beta ), \rho ) = - \beta \Braket{ f' ( \rho ), \rho ( \beta ) } - \log \tr \exp \left[ \log ( \rho ) - \beta f' ( \rho ) \right] . \notag
\end{equation}
We bound the two terms at the right-hand side separately.
Noticing that $D ( \rho ( \beta ), \rho ) \geq 0$, Corollary \ref{cor_equiv} implies 
\begin{equation}
- \Braket{ f' ( \rho ), \rho ( \beta ) } \geq - \Braket{ f' ( \rho ), \rho }.  \notag 
\end{equation}
As $f' ( \rho ) - \lambda_{\min} ( f' ( \rho ) ) I$ is positive semi-definite, one has
\begin{align}
\log \tr \exp \left\{ \log ( \rho ) - \beta \left[ f' ( \rho ) - \lambda_{\min} ( f' ( \rho ) ) I \right] \right\} \leq \log \tr \exp \log ( \rho ) = 0, \notag
\end{align}
i.e., 
\begin{equation}
- \log \tr \exp \left[ \log ( \rho ) - \beta f' ( \rho ) \right] \geq \beta \lambda_{\min} ( f' ( \rho ) ) . \notag
\end{equation}
The lemma follows.
\end{proof}

Consider any convergent sub-sequence $( \rho_k )_{k \in \mathcal{K}}$ converging to a limit point $\bar{\rho}$.
We have proved that there exists some constant $\beta > 0$ such that $D ( \rho_k ( \beta ), \rho_k ) \to 0$ as $k \to \infty$ in $\mathcal{K}$.
Lemma \ref{lem_psi} and the continuity of $\psi$ then imply
\begin{equation}
\lim_{k \to \infty \text{ in } \mathcal{K}} \psi ( \rho_k ) = \psi( \bar{ \rho } ) = 0, \notag
\end{equation}
which establishes the optimality of $\bar{\rho}$.

\section{Numerical Experiment: ML Quantum State Tomography} \label{sec_exp}


\begin{figure} 
\centering
\includegraphics[width=.8\textwidth]{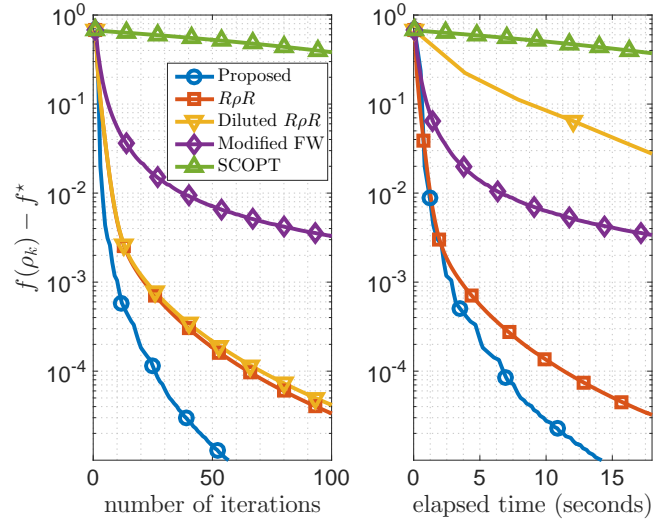} 
\caption{\label{fig_q6} The 6-qubit case.}
\end{figure}

\begin{figure} 
\centering
\includegraphics[width=.8\textwidth]{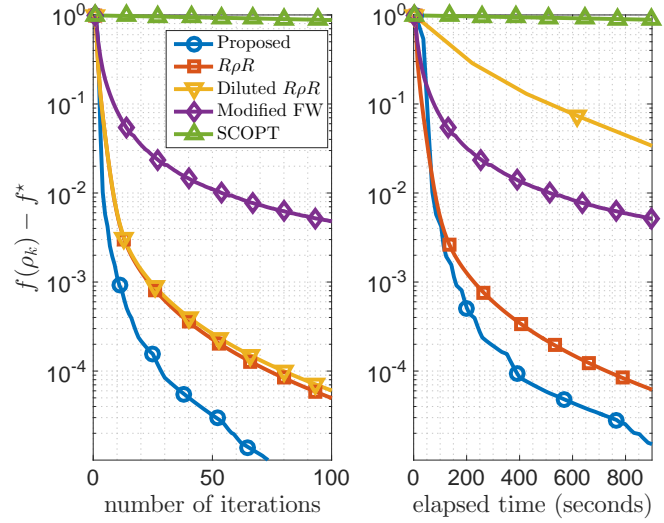} 
\caption{\label{fig_q8} The 8-qubit case.}
\end{figure}

Quantum state tomography is the problem of estimating an unknown density matrix $\rho \in \mathcal{C}^{d \times d}$, by measuring multiple independent and identically prepared copies of it (for details, see, e.g., \cite{Paris2004}).
It is essential in quantum information applications; for example, researchers estimate the density matrix of a prepared quantum gate for calibration.

A measurement setting is mathematically described by a probability operator-valued measure (POVM), a set of Hermitian positive semi-definite matrices summing up to the identity.
Let $\mathcal{M} := \Set{ M_j : j \in \mathcal{J} }$ be a POVM.
The corresponding measurement outcome of $\rho$ is a random variable $\xi$, taking values in $\mathcal{J}$ and satisfying $\mathbb{P} \Set{ \xi = j } = \tr ( M_j \rho )$ for all $j \in \mathcal{J}$.
Given $n$ independent measurement outcomes on $n$ copies, the normalized negative log-likelihood function is then given by $f_3$ (cf. \eqref{eq_f2}), where each $M_i$ is an element in the POVM applied to the $i$-th copy of $\rho$.

The experimental data we have was generated following the setting in \cite{Haffner2005}, in which Pauli-based measurements are used to measure the $W$-state (a specific single-rank density matrix). 
Under this setting, each $M_i$ is a single-rank matrix of the form $v v^H$, $v$ being a tensor product of eigenvectors of Pauli matrices.

As discussed in Section \ref{sec_motivation}, $f_3$ is not Lispchitz continuous in its function value nor its gradient; hence there are few guaranteed-to-converge existing algorithms.
To the best of our knowledge, the diluted $R \rho R$ algorithm \cite{Rehacek2007}, SCOPT \cite{Tran-Dinh2015b}, and the modified Frank-Wolfe algorithm \cite{Odor2016} are the only existing algorithms that are guaranteed to converge.
We will also consider the $R \rho R$ algorithm \cite{Hradil1997}, which does not converge in some cases \cite{Rehacek2007}, but is much faster than its diluted version, the diluted $R \rho R$ algorithm.

We compare the convergence speeds for the $6$-qubit ($d = 2^6$) and $8$-qubit ($d = 2^8$) cases, in Fig. \ref{fig_q6} and \ref{fig_q8}, respectively.
The corresponding ``sample sizes'' (i.e., number of summands in $f_3$) are $n = 60640$ and $n = 460938$, respectively.
The experiments were done in MATLAB R2015b, on a MacBook Pro with an Intel Core i7 2.8GHz processor and 16GB DDR3 memory.
We set $\alpha = 10$, and $\gamma = \tau = 0.5$ in Algorithm \ref{alg} for both cases.
In both figures, $f^\star$ denotes the minimum value of $f_3$ found by the five algorithms in 120 iterations.

One can observe that the EG method with Armijo line search has the fastest empirical convergence speed, in terms of the actual elapsed time.
The numerical results can be explained by theory.
\begin{enumerate}
\item The diluted $R \rho R$ algorithm, using the notation of this paper, iterates as 
\begin{equation}
\rho_{k + 1} = c_k^{-1} \left[ I + \lambda_k f' ( \rho_k ) \right]^H \rho_k \left[ I + \lambda_k f' ( \rho_k ) \right], \notag
\end{equation}
where $c_k$ normalizes the trace of $\rho_{k + 1}$, and to guarantee convergence, the step size $\lambda_k$ is computed by exact line search.
The exact line search procedure renders the algorithm slow.

\item SCOPT is a projected gradient method for minimizing self-concordant functions \cite{Nesterov1994,Nesterov2004}, which chooses the step size such that each iterate lies in the Dikin ellipsoid centered at the previous iterate. 
It is easily checked that $f_3$ is a self-concordant function of parameter $2 \sqrt{ n }$.
Following the theory in \cite{Nesterov1994,Nesterov2004}, the radius of the Dikin ellipsoid shrinks at the rate $O( n^{-1/2} )$, so SCOPT becomes slow when $n$ is large.

\item The modified Frank-Wolfe algorithm is essentially the same as the standard Frank-Wolfe algorithm, with a novel step size to guarantee convergence for minimizing $f_3$. 
Like the standard Frank-Wolfe algorithm, the modified version suffers for a sub-linear convergence rate due to the zig-zagging phenomenon (see, e.g., \cite{Lacoste-Julien2015} for an illustration).
\end{enumerate}

We notice that the empirical convergence rate of the EG method with Armijo line search is linear, despite that $f_3$ is not globally strongly convex.

\section{A Historical Remark}

We have discussed existing analyses of the EG method in Section \ref{sec_intro}. 
As for Armijo line search, there are few existing convergence results as general as Theorem \ref{thm_main}.
The Armijo rule was originally proposed for unconstrained convex minimization \cite{Armijo1966}, assuming that the loss function has a Lipschitz continuous gradient. 
Bertsekas extended the formulation of Armijo line search for continuously differentiable convex functions, and showed that the projected gradient method with Armijo line search (henceforth abbreviated as PGA) always converges for the box and positive orthant constraints in \cite{Bertsekas1976}.
According to \cite{Bertsekas1976} and \cite{Gafni1982}, Goldstein proved the convergence of PGA for a class of constraint sets in a conference paper in 1974.
A general convergence result for PGA, which is valid for any continuously differentiable convex function and any convex constraint set, appeared first in \cite{Gafni1982}, and was then summarized in \cite{Bertsekas2016} (what we cited is the last edition of the book). 
To the best of our knowledge, there was no such general convergence result for the EG method. 
Our Theorem \ref{thm_main} fills this gap.

%

\bibliography{list}
\bibliographystyle{acm}

\end{document}